\documentclass{amsart}
\usepackage{amsmath,amsthm}
\usepackage{amsfonts,amssymb}
\usepackage{accents}
\usepackage{enumerate}
\usepackage{color}
\usepackage{graphicx}
\usepackage{comment}

\newcommand{\lvt}{\left|\kern-1.35pt\left|\kern-1.3pt\left|}
\newcommand{\rvt}{\right|\kern-1.3pt\right|\kern-1.35pt\right|}

\newtheorem{thm}{Theorem}[section]
\newtheorem{cor}[thm]{Corollary}
\newtheorem{lem}[thm]{Lemma}
\newtheorem{prop}[thm]{Proposition}

\theoremstyle{remark}
\newtheorem{rem}{Remark}[section]

 \def\la{{\langle}}
 \def\ra{{\rangle}}

 \def\d{\mathrm{d}}
 
 \def\i{\mathrm{i}}

 \def\a{{\alpha}}
 \def\b{{\beta}}
 \def\g{{\gamma}}

 \def\la{{\langle}}
 \def\ra{{\rangle}}

 \def\CJ{{\mathcal J}}
 \def\CL{{\mathcal L}}
 
 \def\CP{{\mathcal P}}

 \def\CV{{\mathcal V}}
 
 \def\CW{{\mathcal W}}
 
 \def\CC{{\mathbb C}}
 
 \def\NN{{\mathbb N}}
 \def\PP{{\mathbb P}}
 
 \def\RR{{\mathbb R}}

\def\lla{\langle{\kern-2.5pt}\langle}
\def\rra{\rangle{\kern-2.5pt}\rangle}

\newcommand{\wt}{\widetilde}

\graphicspath{{./}}

\numberwithin{equation}{section}

\title[Sharp Bernstein inequalities on a trapezoid]{Weighted Bernstein-type inequalities \\ on a trapezoidal domain}
\author{Tomasz Beberok}
\thanks{Affiliation: University of Agriculture in Krakow, Poland.}
\date{}

\begin{document}

\begin{abstract}
We study weighted $L^2$ Bernstein-type inequalities for algebraic polynomials on a trapezoidal domain. Using a parametrization by the unit square, we construct a natural symmetric second-order Sturm--Liouville operator and derive exact directional Bernstein inequalities with explicit constants. We also obtain strengthened estimates for the ordinary partial derivatives. Finally, we relate the corresponding boundary factors to Dini derivatives of the Siciak extremal function of the trapezoid.
\end{abstract}

\maketitle

\section{Introduction}

The classical Bernstein inequality for an algebraic polynomial $p$ of degree at most $n$ on $[-1,1]$ is
\[
\big\|\sqrt{1-x^2}\,p'\big\|_{[-1,1]}
\le n\,\|p\|_{[-1,1]}.
\]
Its weighted $L^2$ counterpart is also explicit. If $\omega_{a,b}(x)=(1-x)^a(1+x)^b$, $a,b>-1$,
then
\begin{equation}\label{eq:intro-interval-L2}
\left\|\sqrt{1-x^2}\,p'\right\|_{L^2([-1,1],\omega_{a,b})}
\le
\sqrt{n(n+a+b+1)}\,
{\|p\|}_{L^2([-1,1],\omega_{a,b})},
\end{equation}
with equality for the Jacobi polynomial $P_n^{(a,b)}$; see \cite{GuessabMilovanovic1994,Szego}. In the Chebyshev case
$a=b=-\frac12$, inequality \eqref{eq:intro-interval-L2} becomes
\[
\left\|(1-x^2)^{1/4}p'\right\|_{L^2([-1,1])}
\le
n\left\|(1-x^2)^{-1/4}p\right\|_{L^2([-1,1])},
\]
with equality for the Chebyshev polynomial of the first kind. The role of such inequalities in polynomial approximation
is discussed, for example, in \cite{DitzianTotik}.

These one-dimensional inequalities provide the basic model for Bernstein-type
estimates on multivariate domains, where the factor multiplying a derivative reflects
the geometry of the boundary. Determining exact constants in the multivariate $L^2$
setting is substantially more delicate. For example, Kro\'o \cite{K2022} determined the
exact weighted $L^2$ Bernstein inequality on the unit ball and showed that it
depends on the parity of the degree. More recently, spectral methods based on
self-adjoint second-order differential operators have led to new sharp and strengthened
$L^2$ Bernstein inequalities on simplices and balls. Strengthened weighted Bernstein inequalities on triangles
were obtained by Xu \cite{X23} as part of a broader study of Bernstein inequalities on conic domains.

For the simplex
$
\Delta^d=\{x\in\mathbb R^d:x_i\ge0,\ |x|:=x_1+\cdots+x_d\le1\},
$
equipped with the Jacobi weight
$
W_\kappa(x)=x_1^{\kappa_1}\cdots x_d^{\kappa_d}(1-|x|)^{\kappa_{d+1}}$,
$\kappa_i>-1,
$
Ge and Xu \cite{GX} showed that the classical spectral operator admits several
different self-adjoint decompositions.
These decompositions provide \(L^2\) estimates for the partial derivatives with refined boundary factors. In particular, for \(i\ne \ell\),
\begin{equation}\label{eq:intro-simplex-partial}
\left\|
\frac{\sqrt{x_i(1-|x|)}}{\sqrt{1-x_\ell}}\,
\partial_i f
\right\|_{L^2(\Delta^d,W_\kappa)}
\le
\sqrt{n(n+\kappa_1+\ldots+\kappa_{d+1}+d)}\,
\|f\|_{L^2(\Delta^d,W_\kappa)}
\end{equation}
for every polynomial $f$ of degree at most $n$ \cite[Corollary~2.7]{GX}.
Ge and Xu also proved a stronger Bernstein inequality for the partial derivative with the factor
\[
\phi_i(x)
=
\frac{\sqrt{x_i(1-|x|)}}{\sqrt{x_i+1-|x|}},
\]
which extends to weighted $L^p$ spaces with arbitrary doubling weights \cite[Theorem~3.1]{GX}.
In the case $p=2$ this gives
\[
\|\phi_i\,\partial_i f\|_{L^2(\Delta^d,W_\kappa)}
\le c\,n\,\|f\|_{L^2(\Delta^d,W_\kappa)}.
\]
The factor $\phi_i$ has a direct pluripotential-theoretic interpretation. Indeed, if $V_{\Delta^d}$ denotes the Siciak--Zaharjuta extremal function of the simplex, then
\[
D_i^+V_{\Delta^d}(x)
=
\frac{\sqrt{x_i+1-|x|}}
{\sqrt{x_i}\sqrt{1-|x|}}
=
\frac{1}{\phi_i(x)};
\]
see \cite{Baran,B1,B2,B3}. For $d=2$, this factor coincides with the one obtained from the
spectral estimate \eqref{eq:intro-simplex-partial}. For $d\ge3$, the two families are
different; in fact, the $L^p$ factor $\phi_i$ yields a stronger partial derivative
estimate than the factors obtained from the alternative self-adjoint decompositions of
the spectral operator.

Analogous sharp $L^2$ Bernstein inequalities have been obtained for the unit ball
$ \mathbb B^d=\{x\in\mathbb R^d:\|x\|\le1\}$, equipped with the Jacobi weight
\[
W_\mu(x)=(1-\|x\|^2)^\mu,\quad \mu>-1.
\]
The classical self-adjoint representation of the spectral operator yields, in particular,
\begin{equation*}
\left\|
\sqrt{1-\|x\|^2}\,\partial_i f
\right\|_{L^2(\mathbb B^d,W_\mu)}
\le
\sqrt{n(n+2\mu+d)}\,
\|f\|_{L^2(\mathbb B^d,W_\mu)}.
\end{equation*}
In addition to the above inequality, a different self-adjoint representation obtained in \cite{BX} leads to exact
$L^2$ Bernstein inequalities involving the radial derivative
$\langle x,\nabla\rangle$ and the angular derivatives
$D_{i,j}=x_i\partial_j-x_j\partial_i$. In particular,
\[
\left\|
\frac{\sqrt{1-\|x\|^2}}{\|x\|}
\langle x,\nabla\rangle f
\right\|_{L^2(\mathbb B^d,W_\mu)}
\le
\sqrt{n(n+2\mu+d)}\,
\|f\|_{L^2(\mathbb B^d,W_\mu)}
\]
for even $n$. For odd \(n\), the above estimate also holds with the smaller constant $\sqrt{n(n+2\mu+d)-d+1}$.

The same work also contains a strengthened Bernstein inequality for the partial
derivative $\partial_i$ in the weighted $L^p$ setting, with the factor
\[
\Phi_i(x)
=
\frac{\sqrt{1-\|x\|^2}}
{\sqrt{x_i^2+1-\|x\|^2}}.
\]
The corresponding pluripotential relation on the unit ball is
\[
D_i^+V_{\mathbb B^d}(x)=\frac{1}{\Phi_i(x)}.
\]
Thus, as on the simplex, the strengthened factor associated with a coordinate
derivative is precisely the reciprocal of the corresponding Dini derivative of the
Siciak extremal function.

Beyond the simplex and the ball, Xu \cite{XuParabolic}
established sharp weighted $L^2$ Bernstein inequalities
on parabolic domains using differential equations
satisfied by orthogonal polynomial bases.

The present paper considers the trapezoid
\[
\mathrm{T}=\{(x,y)\in\mathbb R^2:0\le y\le1,\ 0\le x\le2-y\}.
\]
The situation on the trapezoid is fundamentally different from that on the
simplex and the unit ball. The classical Krall–Sheffer classification (see \cite{KrallSheffer,XuSpectral}) concerns second-order differential operators for which every element of \(\mathcal V_n\) is an eigenfunction with a common eigenvalue depending only on \(n\). The present trapezoid does not belong to these classical families.
Thus, the classical spectral method used on the simplex and the ball is not
available in the present setting. Nevertheless, the parametrization of the
trapezoid by the unit square provides a natural symmetric Sturm--Liouville
structure inherited from the one-dimensional Jacobi operators. Although the
resulting differential operator does not preserve the algebraic polynomial
spaces, its associated energy form allows us to derive exact weighted $L^2$
Bernstein inequalities for the distinguished derivatives considered in this
paper. Moreover, despite the absence of a polynomial spectral structure, the
boundary factors arising in the strengthened inequalities retain a close
connection with the pluripotential geometry of the trapezoid. The factor
associated with $\partial_x$ is, up to a constant normalization, the reciprocal
of the corresponding Dini derivative of the Siciak extremal function of
$\mathrm T$, while the analogous factor for $\partial_y$ is equivalent to the
reciprocal of the corresponding Dini derivative up to an absolute constant.
These relations connect the directional inequalities obtained from the square parametrization with the pluripotential geometry of the trapezoid.
This connection therefore persists even though the polynomial spectral
structure available on the classical domains is absent on the trapezoid.

\section{Geometry of the trapezoid}\label{sec:geom}
The geometry of $\mathrm{T}$ suggests a natural reduction to the unit square.
Indeed, for each fixed $y\in[0,1]$, the horizontal section of $\mathrm{T}$ is
the interval $[0,2-y]$. Normalizing the horizontal variable by its length
leads to the coordinates
\[
s=\frac{x}{2-y},
\qquad
t=y,
\]
and hence to the parametrization
\begin{equation}\label{eq:Psi}
\Psi:[0,1]^2\to\mathrm{T},\qquad \Psi(s,t)=((2-t)s,\ t).
\end{equation}
We write $(x,y)=\Psi(s,t)$. The Jacobian determinant of $\Psi$ is
$
J(s,t)=2-t.
$
This change of variables will be used throughout the paper. In particular,
it transforms the weighted problem on $\mathrm{T}$ into a product problem
on the unit square and provides a convenient description of the relevant
first-order derivatives.

Let $f$ be a smooth function on $\mathrm{T}$ and set
$
g=f\circ\Psi.
$
By the chain rule,
\begin{equation}\label{eq:derivs_st}
\partial_s g=(2-t)\,(\partial_x f)\circ\Psi,
\qquad
\partial_t g=-s\big((\partial_x f)\circ\Psi\big)+ \big(\partial_y f\big)\circ\Psi.
\end{equation}
Consequently,
\begin{equation}\label{eq:derivs_xy}
(\partial_x f)\circ\Psi=\frac{1}{2-t}\,\partial_s g,
\qquad
(\partial_y f)\circ\Psi=\partial_t g+\frac{s}{2-t}\,\partial_s g.
\end{equation}

The relations \eqref{eq:derivs_xy} suggest introducing a directional derivative on $\mathrm{T}$ that corresponds exactly to $\partial_t$ in the square coordinates. Define
\begin{equation}\label{eq:Dy}
D_y f:=\partial_y f-\frac{x}{2-y}\,\partial_x f.
\end{equation}
Then \eqref{eq:derivs_st}, together with $s=x/(2-y)$, immediately gives
\begin{equation}\label{lem:Dy_is_dt}
(D_yf)\circ\Psi=\partial_t(f\circ\Psi).
\end{equation}
The boundary of $\mathrm{T}$ consists of four line segments:
\[
x=0,\qquad y=0,\qquad y=1,\qquad x+y=2.
\]
The corresponding distance-like quantities are
\begin{equation}\label{eq:distances}
d_1=x,\qquad d_2=y,\qquad d_3=1-y,\qquad d_4=2-x-y.
\end{equation}

Motivated by the sharp Bernstein theory on classical domains, we introduce
boundary factors built from the quantities in \eqref{eq:distances} and adapted
to the parametrization \eqref{eq:Psi}:
\begin{equation}\label{eq:Phi_def}
\Phi_x(x,y):=\sqrt{x(2-x-y)},
\qquad
\Phi_y(x,y):=\sqrt{y(1-y)}.
\end{equation}
Under the change of variables $\Psi$, the factors in \eqref{eq:Phi_def} take a product form:
\begin{equation}\label{eq:Phi_pullback}
\Phi_x(\Psi(s,t))=(2-t)\sqrt{s(1-s)},
\qquad
\Phi_y(\Psi(s,t))=\sqrt{t(1-t)}.
\end{equation}
In particular, $\Phi_x$ separates into a $t$-dependent factor and the classical
Bernstein weight $\sqrt{s(1-s)}$, while $\Phi_y$ reduces to the standard
one–dimensional factor $\sqrt{t(1-t)}$.

We work with a natural
Jacobi-type family of weights on $\mathrm{T}$. For $\a,\b,\g,\delta>-1$, we define
\begin{equation}\label{eq:W_D}
W_{\a,\b,\g,\delta}(x,y)
:= x^\g(2-x-y)^\delta\, y^\a(1-y)^\b\, (2-y)^{-(\g+\delta+1)}.
\end{equation}
We write $W:=W_{\a,\b,\g,\delta}$ when parameters are fixed. A basic consequence of \eqref{eq:W_D} and the parametrization $\Psi$, which will be used repeatedly, is that the weighted measure on $\mathrm{T}$ is transformed into a product Jacobi measure on the unit square.

\begin{lem}\label{lem:pullback}
Let $f\in L^2(\mathrm{T},W)$. Define $g=f\circ\Psi$. Then
\begin{equation}\label{eq:pullback_identity}
\int_{\mathrm{T}} |f(x,y)|^2 \, W(x,y)\,\d x\,\d y
=
\int_0^1\int_0^1 |g(s,t)|^2\, s^\g(1-s)^\delta\, t^\a(1-t)^\b\,\d s\,\d t.
\end{equation}
\end{lem}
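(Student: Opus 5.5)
The plan is a direct change of variables through the parametrization $\Psi$ from \eqref{eq:Psi}. First I check that $\Psi$ is a bijection from the open square $(0,1)^2$ onto the interior of $\mathrm{T}$. Its inverse is $(x,y)\mapsto (x/(2-y),\,y)$, which is smooth because $2-y\ge 1$ on $\mathrm{T}$. The Jacobian determinant is $J(s,t)=2-t>0$. The boundaries of the square and of $\mathrm{T}$ are Lebesgue null sets, so the change-of-variables formula for nonnegative measurable functions gives
\[
\int_{\mathrm{T}} |f|^2 W\,\d x\,\d y=\int_0^1\!\!\int_0^1 |g(s,t)|^2\, W(\Psi(s,t))\,(2-t)\,\d s\,\d t .
\]
This holds in $[0,\infty]$, so it applies to any measurable $f$. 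In particular both sides are finite exactly when $f\in L^2(\mathrm{T},W)$.

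The remaining step is to compute $W(\Psi(s,t))(2-t)$. Substituting $x=(2-t)s$ and $y=t$ gives $x^\g=(2-t)^\g s^\g$. For the oblique side, $2-x-y=(2-t)-(2-t)s=(2-t)(1-s)$, so $(2-x-y)^\delta=(2-t)^\delta(1-s)^\delta$. This factorization is exactly the one already used in \eqref{eq:Phi_pullback}. The factors $y^\a(1-y)^\b$ become $t^\a(1-t)^\b$. Collecting the powers of $2-t$:
\[
(2-t)^{\g}\,(2-t)^{\delta}\,(2-t)^{-(\g+\delta+1)}\,(2-t)=1 .
\]
Hence $W(\Psi(s,t))\,J(s,t)=s^\g(1-s)^\delta t^\a(1-t)^\b$, which is the identity \eqref{eq:pullback_identity}.

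There is no genuine obstacle here. The only point needing care is the measure-theoretic justification: the weights may be singular on the boundary, but since the integrands are nonnegative, Tonelli's theorem and the change-of-variables theorem on the open sets apply without any integrability assumption. Everything else is the algebra above, and the normalizing factor $(2-y)^{-(\g+\delta+1)}$ in \eqref{eq:W_D} was evidently chosen precisely to absorb the powers of $2-t$.
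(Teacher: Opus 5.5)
Your proposal is correct and matches the paper's proof: the substitution $x=(2-t)s$, $y=t$, the factorization $2-x-y=(2-t)(1-s)$, and the cancellation of the powers of $2-t$ against the Jacobian $J(s,t)=2-t$. Your measure-theoretic remarks (bijectivity on the open square, null boundaries, change of variables for nonnegative integrands) justify a step that the paper leaves implicit.
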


\begin{proof}
If $x=(2-t)s$, $y=t$, then $2-x-y=(2-t)(1-s)$ and $2-y=2-t$.  Thus,
\begin{align*}
W(\Psi(s,t))
&=
\big((2-t)s\big)^\g\big((2-t)(1-s)\big)^\delta\ t^\a(1-t)^\b\ (2-t)^{-(\g+\delta+1)}
\\&=
(2-t)^{-1}s^\g(1-s)^\delta t^\a(1-t)^\b.
\end{align*}
Since $\d x \, \d y=(2-t)\,\d s \, \d t$, we get $W(\Psi(s,t))\,\d x\,\d y = s^\g(1-s)^\delta t^\a(1-t)^\b\,\d s\,\d t$,
which yields \eqref{eq:pullback_identity}.
\end{proof}

\section{Sturm--Liouville operator on the square}\label{sec:SL_square}

Throughout this section we assume that $\alpha,\beta,\gamma,\delta>-1$. Let
$
Q=[0,1]^2$, $
w_{a,b}(r):=r^a(1-r)^b,
$ and
\[
\CW(s,t):=\CW_{\a,\b,\g,\delta}(s,t)
=w_{\g,\delta}(s)w_{\a,\b}(t).
\]
For $a,b>-1$, we consider the Jacobi Sturm--Liouville differential expression
\begin{equation}\label{eq:Js}
\CJ_r(w_{a,b},h)
:=
-\frac{1}{w_{a,b}(r)}
\frac{\d}{\d r}
\Big(w_{a,b}(r)\,r(1-r)\,h'(r)\Big),
\qquad 0<r<1.
\end{equation}
Equivalently,
\[
\CJ_r(w_{a,b},h)
=
-r(1-r)h''(r)
-\big(a+1-(a+b+2)r\big)h'(r).
\]
On the square we use the tensor-product differential expression
\begin{equation}\label{eq:Ltilde}
\CL g(s,t)
:=
\CJ_s(w_{\g,\delta},g(\bullet,t))
+
\CJ_t(w_{\a,\b},g(s,\bullet)).
\end{equation}

\begin{prop}\label{prop:energy_square_bilinear}
For $f,g\in C^2(Q)$,
\begin{equation}\label{eq:energy_square_bilinear}
\la \CL g,f\ra_{L^2(Q,\CW)}
=
\int_Q
\Big(
s(1-s)(\partial_s g)(\partial_s f)
+
t(1-t)(\partial_t g)(\partial_t f)
\Big)\CW(s,t)\,\d s\,\d t.
\end{equation}
Consequently,
\begin{equation}\label{eq:symmetry_square}
\la \CL g,f\ra_{L^2(Q,\CW)}
=
\la g,\CL f\ra_{L^2(Q,\CW)},
\end{equation}
and the differential expression $\CL$ is formally symmetric and nonnegative.
\end{prop}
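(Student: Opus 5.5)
The plan is to integrate by parts separately in each variable, using Fubini and the product structure $\CW(s,t)=w_{\g,\delta}(s)w_{\a,\b}(t)$. Write
\[
\la \CL g,f\ra_{L^2(Q,\CW)}=\int_0^1 w_{\a,\b}(t)\int_0^1 \CJ_s(w_{\g,\delta},g(\bullet,t))\,f(s,t)\,w_{\g,\delta}(s)\,\d s\,\d t+(\text{analogous $t$-term}).
\]
Here the inner factor $w_{\g,\delta}(s)$ cancels the prefactor $1/w_{\g,\delta}(s)$ in \eqref{eq:Js}. For fixed $t$, the inner integral then becomes
\[
-\int_0^1 \frac{\d}{\d s}\Big(w_{\g,\delta}(s)s(1-s)\partial_s g\Big) f\,\d s .
\]

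Integrating by parts on $[\varepsilon,1-\varepsilon]$ gives two pieces. The first is the volume term
\[
\int_\varepsilon^{1-\varepsilon} s(1-s)\,\partial_s g\,\partial_s f\,w_{\g,\delta}(s)\,\d s .
\]
The second is the boundary term
\[
-\big[s^{\g+1}(1-s)^{\delta+1}\,\partial_s g\,f\big]_{\varepsilon}^{1-\varepsilon}.
\]
Since $f,g\in C^2(Q)$, the functions $\partial_s g$ and $f$ are bounded on $Q$. Because $\g+1>0$ and $\delta+1>0$, the boundary term tends to $0$ as $\varepsilon\to0$, uniformly in $t$.

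The volume integrals converge to the full integrals by dominated convergence. The integrand is bounded by $C\,w_{\g,\delta}(s)$, which is integrable because $\g,\delta>-1$.

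I expect the one point needing care to be the limit $\varepsilon\to0$. The operator's own integrand $\CJ_s(\cdots)f\,w$ must be integrable too. Indeed $\CJ_s g=-s(1-s)g''-(\g+1-(\g+\delta+2)s)g'$ is bounded, so $\CJ_s g\,f\,w$ is integrable. The truncated integrals therefore converge on both sides, and the identity passes to the limit.

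The $t$-term is handled identically with $(\a,\b)$ in place of $(\g,\delta)$. Integrating the resulting identities against the remaining one-variable weight and summing yields \eqref{eq:energy_square_bilinear}.

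The right-hand side of \eqref{eq:energy_square_bilinear} is symmetric in $f$ and $g$, which gives \eqref{eq:symmetry_square}. For real-valued functions, taking $f=g$ gives
\[
\la \CL g,g\ra=\int_Q\big(s(1-s)(\partial_s g)^2+t(1-t)(\partial_t g)^2\big)\CW\ge0,
\]
since $s(1-s),\,t(1-t),\,\CW\ge0$ on $Q$. This proves that $\CL$ is formally symmetric and nonnegative. If complex-valued functions are intended, the same argument applies with $\bar f$ in place of $f$.
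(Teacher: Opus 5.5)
Your proposal is correct and follows the same route as the paper: integrate by parts separately in $s$ and $t$. The boundary terms $s^{\g+1}(1-s)^{\delta+1}\partial_s g\,f$ (and their $t$-analogues) vanish because $\g+1,\delta+1>0$ and functions in $C^2(Q)$ have bounded derivatives up to second order. Your truncation to $[\varepsilon,1-\varepsilon]$ with dominated convergence simply makes explicit the limiting argument the paper leaves implicit.
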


\begin{proof}
We integrate by parts separately in the two variables. For the $s$-part,
\begin{align*}
\la \CJ_s(w_{\g,\delta},g(\bullet,t)),f\ra_{L^2(Q,\CW)}
&=
-\int_0^1\int_0^1
\partial_s\!\left(
w_{\g,\delta}(s)s(1-s)\partial_s g
\right)
f\,w_{\a,\b}(t)\,\d s\,\d t
\\
&=
\int_Q
s(1-s)(\partial_s g)(\partial_s f)\CW(s,t)\,\d s\,\d t.
\end{align*}
The boundary terms vanish because all endpoint exponents are positive and the functions and their first derivatives are bounded. The same argument applies to the $t$-part. Summing the two identities gives
\eqref{eq:energy_square_bilinear}; symmetry and nonnegativity follow immediately.
\end{proof}

Let $\left\{P_n^{(a,b)}(u)\right\}_{n\ge0}$ be the Jacobi polynomials orthogonal on \([-1,1]\) with respect to the weight \((1-u)^a(1+u)^b\), \(a,b>-1\), and define their shifted versions on $[0,1]$ by
\[
\mathsf P_n^{(a,b)}(u):=P_n^{(a,b)}(1-2u).
\]
The Jacobi differential equation \cite[Theorem~4.2.1]{Szego}, after the change of
variables $x=1-2u$, gives
\begin{equation}\label{eq:1d_eigs}
\CJ_u\!\left(w_{a,b},\mathsf P_n^{(a,b)}\right)
=
\lambda_n^{(a,b)}\mathsf P_n^{(a,b)}(u),
\qquad
\lambda_n^{(a,b)}=n(n+a+b+1).
\end{equation}

The eigenvalue relation \eqref{eq:1d_eigs}, together with the orthogonality of
the Jacobi polynomials, imply that, for $a,b>-1$ and $n\in\mathbb N_0$, every real univariate polynomial $p$ of
degree at most $n$ satisfies
\begin{equation}\label{eq:1d_bernstein}
\int_0^1 u(1-u)|p'(u)|^2w_{a,b}(u)\,\d u
\le
n(n+a+b+1)
\int_0^1 |p(u)|^2w_{a,b}(u)\,\d u.
\end{equation}
The constant is sharp, with equality attained for $p=\mathsf P_n^{(a,b)}$.

Let $\Pi_n$ denote the space of algebraic polynomials in two variables of
total degree at most $n$, and $\CV_n(Q,\CW)$ the space of polynomials of total degree $n$
that are orthogonal on $[0,1]^2$ with respect to $\CW$.

\begin{prop}\label{prop:2d}
For $n\in\NN_0$ and $0\le k\le n$, define
\begin{equation}\label{eq:Phi_mk}
\Phi_k^n(s,t)
:=
\mathsf P_k^{(\g,\delta)}(s)
\mathsf P_{n-k}^{(\a,\b)}(t).
\end{equation}
Then $\{\Phi_k^n:0\le k\le n\}$ is a mutually orthogonal basis of
$\CV_n(Q,\CW)$, and
\begin{equation}\label{eq:2d_eigs}
\CL\Phi_k^n
=
\Lambda_{n,k}\Phi_k^n,
\qquad
\Lambda_{n,k}
=
k(k+\g+\delta+1)
+
(n-k)(n-k+\a+\b+1).
\end{equation}
\end{prop}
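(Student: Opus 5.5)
The statement has three parts: the $\Phi_k^n$ lie in $\CV_n(Q,\CW)$, they form an orthogonal basis of it, and they are eigenfunctions of $\CL$. I would prove each directly from the product structure of $\CW$ and the one-dimensional facts already recorded.

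\emph{Orthogonality.} Since $\CW(s,t)=w_{\g,\delta}(s)w_{\a,\b}(t)$, Fubini gives
\[
\la \Phi_k^n,\Phi_j^m\ra_{L^2(Q,\CW)}
=
\Big(\int_0^1 \mathsf P_k^{(\g,\delta)}\mathsf P_j^{(\g,\delta)}w_{\g,\delta}\,\d s\Big)
\Big(\int_0^1 \mathsf P_{n-k}^{(\a,\b)}\mathsf P_{m-j}^{(\a,\b)}w_{\a,\b}\,\d t\Big).
\]
The shifted Jacobi polynomials are orthogonal on $[0,1]$ with respect to $w_{a,b}$; this follows from the substitution $x=1-2u$, which turns $(1-x)^a(1+x)^b$ into a constant multiple of $u^a(1-u)^b$. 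Hence the product vanishes unless $k=j$ and $n-k=m-j$. In particular, $\Phi_k^n$ is orthogonal to $\Phi_j^m$ whenever $(m,j)\ne(n,k)$. The norms are positive, since each factor is a nonzero polynomial and the weight is positive on $(0,1)$.

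\emph{Basis of $\CV_n(Q,\CW)$.} Each $\Phi_k^n$ has total degree exactly $n$: its leading part is the product of the nonzero leading coefficients times $s^k t^{n-k}$. The family $\{\Phi_j^m: m\le n-1,\,0\le j\le m\}$ has $\dim\Pi_{n-1}=n(n+1)/2$ elements. These elements are mutually orthogonal and nonzero, so they are linearly independent and hence span $\Pi_{n-1}$. Therefore each $\Phi_k^n$ is orthogonal to all of $\Pi_{n-1}$, which places it in $\CV_n(Q,\CW)$.

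We also need $\dim\CV_n(Q,\CW)=n+1$. This holds because $\CV_n$ is the orthogonal complement of $\Pi_{n-1}$ inside $\Pi_n$, and $\dim\Pi_n-\dim\Pi_{n-1}=n+1$. The $n+1$ functions $\Phi_k^n$ are mutually orthogonal and nonzero, hence independent, so they form a basis of $\CV_n(Q,\CW)$.

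\emph{Eigenvalue relation.} The operator $\CL$ acts on a product $a(s)b(t)$ as $\CJ_s(w_{\g,\delta},a)\,b+a\,\CJ_t(w_{\a,\b},b)$. Applying \eqref{eq:1d_eigs} in each variable gives
\[
\CL\Phi_k^n=\big(\lambda_k^{(\g,\delta)}+\lambda_{n-k}^{(\a,\b)}\big)\Phi_k^n,
\]
which is exactly $\Lambda_{n,k}$.

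No step is a genuine obstacle. The only point needing care is the dimension count, that is, verifying that the lower-degree products exhaust $\Pi_{n-1}$. The orthogonality-implies-independence argument settles this cleanly.
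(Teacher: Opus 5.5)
Your proposal is correct and follows the paper's proof. The eigenvalue relation comes from applying \eqref{eq:1d_eigs} separately in $s$ and $t$, exactly as in the paper. For the orthogonality and basis part the paper cites the standard product-weight construction (Dunkl--Xu, Proposition~2.2.1); your Fubini argument with the dimension count $\dim\Pi_{n-1}=n(n+1)/2$, $\dim\CV_n(Q,\CW)=n+1$ is a self-contained proof of that cited fact.
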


\begin{proof}
The eigenvalue relation follows directly from \eqref{eq:1d_eigs}, since the two
one-dimensional Jacobi expressions act in separate variables. The orthogonality
and completeness are the standard product-weight construction; see
\cite[Proposition~2.2.1]{DunklXu}.
\end{proof}

The family
$
\{\Phi_k^n:\ n\ge0,\ 0\le k\le n\}
$
is a complete orthogonal system in $L^2(Q,\CW)$. Hence the action of $\CL$ on
polynomials determines a canonical nonnegative self-adjoint realization in
$L^2(Q,\CW)$ by spectral closure. In what follows, only its action on polynomials
and the energy identity \eqref{eq:energy_square_bilinear} will be needed.

For a polynomial $g(s,t)$, we denote by
$\deg_s g$ and $\deg_t g$ its degrees with respect to $s$ and $t$,
respectively.
\begin{thm}\label{BforSq}
Let $n\in\NN$. Then every polynomial $g\in\Pi_n$ satisfies
\begin{equation}\label{eq:Bernstein_square_energy}
\|\sqrt{s(1-s)}\,\partial_s g\|^2_{L^2(Q,\CW)}
+
\|\sqrt{t(1-t)}\,\partial_t g\|^2_{L^2(Q,\CW)}
\le
M(n,\CW)\|g\|^2_{L^2(Q,\CW)},
\end{equation}
where
\[
M(n,\CW)
=
\max\{\Lambda_{n,k}:0\le k\le n\}
=
n\bigl(n+1+\max\{\alpha+\beta,\gamma+\delta\}\bigr).
\]
The constant $M(n,\CW)$ is sharp. Equality is attained by $\Phi_0^n$ if
$\alpha+\beta>\gamma+\delta$, by $\Phi_n^n$ if
$\alpha+\beta<\gamma+\delta$, and by every nonzero element of
$\operatorname{span}\{\Phi_0^n,\Phi_n^n\}$ if
$\alpha+\beta=\gamma+\delta$.

Moreover, if $g$ is a polynomial such that $\deg_s g\le n$, then
\begin{equation}\label{eq:Bernstein_square_s}
\|\sqrt{s(1-s)}\,\partial_s g\|_{L^2(Q,\CW)}^2
\le
n(n+\g+\delta+1)\|g\|_{L^2(Q,\CW)}^2.
\end{equation}
Similarly, if $\deg_t g\le n$, then
\begin{equation}\label{eq:Bernstein_square_t}
\|\sqrt{t(1-t)}\,\partial_t g\|_{L^2(Q,\CW)}^2
\le
n(n+\a+\b+1)\|g\|_{L^2(Q,\CW)}^2.
\end{equation}
\end{thm}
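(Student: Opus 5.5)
The plan is to expand $g$ in the orthogonal basis of Proposition~\ref{prop:2d} and read the left-hand side of \eqref{eq:Bernstein_square_energy} as the energy form $\la \CL g,g\ra_{L^2(Q,\CW)}$ given by Proposition~\ref{prop:energy_square_bilinear}.

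First, for $g\in\Pi_n$, the spaces $\CV_m(Q,\CW)$, $0\le m\le n$, together span $\Pi_n$. Hence we can write
\[
g=\sum_{m=0}^n\sum_{k=0}^m c_{m,k}\Phi_k^m .
\]
By \eqref{eq:2d_eigs} we have $\CL g=\sum c_{m,k}\Lambda_{m,k}\Phi_k^m$. Since $g$ is a polynomial, it lies in $C^2(Q)$, so \eqref{eq:energy_square_bilinear} with $f=g$ identifies the left side of \eqref{eq:Bernstein_square_energy} with $\la\CL g,g\ra$. Mutual orthogonality of all the $\Phi_k^m$ then gives
\[
\la\CL g,g\ra=\sum_{m,k} \Lambda_{m,k}\,|c_{m,k}|^2\,\|\Phi_k^m\|^2 ,
\qquad
\|g\|^2=\sum_{m,k}|c_{m,k}|^2\|\Phi_k^m\|^2 .
\]
So it suffices to show $\Lambda_{m,k}\le M(n,\CW)$ for all $m\le n$ and $k\le m$.

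The main computation is this maximization. For fixed $m$, $\Lambda_{m,k}$ is a convex quadratic in $k$: its leading coefficient is $2>0$. So its maximum over $0\le k\le m$ is attained at an endpoint, namely
\[
\max\{\Lambda_{m,0},\Lambda_{m,m}\}=m\bigl(m+1+\max\{\a+\b,\g+\delta\}\bigr).
\]
This expression is increasing in $m\ge0$, because $1+\max\{\a+\b,\g+\delta\}>-1$ and hence $2m+1+\max\{\cdot\}>0$ for $m\ge1$. Therefore $\Lambda_{m,k}\le M(n,\CW)$, which proves the inequality.

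For sharpness and the equality cases, interior $k$ give $\Lambda_{n,k}<M(n,\CW)$ by strict convexity, and $m<n$ gives strictly smaller values. Equality therefore forces the coefficients to be supported on those $(n,k)$ with $k\in\{0,n\}$ and $\Lambda_{n,k}=M(n,\CW)$. Comparing $\Lambda_{n,0}=n(n+\a+\b+1)$ with $\Lambda_{n,n}=n(n+\g+\delta+1)$ yields the three stated cases. Conversely, any such $g$ attains equality by the Parseval identities above.

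For \eqref{eq:Bernstein_square_s} the total-degree hypothesis is replaced by $\deg_s g\le n$, so the previous argument does not apply directly. Instead, I would use the tensor structure. Expand
\[
g=\sum_{j}\mathsf P_j^{(\a,\b)}(t)\,q_j(s),
\qquad \deg q_j\le n .
\]
The sum is finite and uses $t$-orthogonality. Then $\partial_s g=\sum_j\mathsf P_j^{(\a,\b)}(t)q_j'(s)$. By orthogonality in $t$, both sides of \eqref{eq:Bernstein_square_s} split as sums over $j$ weighted by $\|\mathsf P_j^{(\a,\b)}\|^2_{L^2(w_{\a,\b})}$. Alternatively, one can apply Fubini directly: for each fixed $t$, apply \eqref{eq:1d_bernstein} in $s$ to $g(\cdot,t)$ with weight $w_{\g,\delta}$, then integrate against $w_{\a,\b}(t)\,\d t$. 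The Fubini route is simplest and I will use it. The $t$-version \eqref{eq:Bernstein_square_t} is symmetric.

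The only delicate point is the endpoint and monotonicity analysis of $\Lambda_{m,k}$, in particular checking that $m\mapsto m(m+c)$ with $c>-1$ is nondecreasing on $\NN_0$. This is also where the strictness needed for the equality characterization comes from.
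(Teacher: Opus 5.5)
Your proposal is correct and follows essentially the same route as the paper: you expand $g$ in the product basis $\Phi_k^m$, combine the energy identity \eqref{eq:energy_square_bilinear} with the eigenvalue relation \eqref{eq:2d_eigs}, use convexity of $\Lambda_{m,k}$ in $k$, and obtain the separate $s$- and $t$-estimates via Fubini and the one-dimensional inequality \eqref{eq:1d_bernstein}. You also make explicit two points the paper leaves implicit: that $m\mapsto m\bigl(m+1+\max\{\a+\b,\g+\delta\}\bigr)$ is increasing on $\NN_0$ (most cleanly seen from the discrete increment $2m+2+\max\{\a+\b,\g+\delta\}>0$, rather than from the derivative), and the strict inequalities that characterize the equality cases.
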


\begin{proof}
Expand
\[
g=\sum_{j=0}^n\sum_{k=0}^j c_{j,k}\Phi_k^j.
\]
By \eqref{eq:energy_square_bilinear} and \eqref{eq:2d_eigs},
\[
\langle \CL g,g\rangle_{L^2(Q,\CW)}
=
\sum_{j=0}^n\sum_{k=0}^j
\Lambda_{j,k}|c_{j,k}|^2\|\Phi_k^j\|_{L^2(Q,\CW)}^2.
\]
Since $\Lambda_{j,k}$ is convex in $k$, its maximum over $0\le k\le j\le n$ is attained at an endpoint, which gives
\[
\Lambda_{j,k}\le n\bigl(n+1+\max\{\alpha+\beta,\gamma+\delta\}\bigr)=M(n,\CW).
\]
Together with \eqref{eq:energy_square_bilinear}, this proves
\eqref{eq:Bernstein_square_energy}. The equality cases follow from the
endpoint eigenfunctions described above.

Finally, \eqref{eq:Bernstein_square_s} and \eqref{eq:Bernstein_square_t}
follow by applying the one-dimensional sharp Bernstein inequality
\eqref{eq:1d_bernstein} to one variable at a time and integrating in the
other variable.
\end{proof}

\section{Sturm--Liouville operator on the trapezoid}\label{sec:SL_D}

We transfer the operator $\CL$ defined in \eqref{eq:Ltilde} from the square to $\mathrm{T}$ by conjugation with $\Psi$. Define $\wt{\CL}$ on  $C^2(\mathrm{T})$ by
\begin{equation}\label{eq:L_def}
\wt{\CL} f:=\CL(f\circ\Psi) \circ \Psi^{-1}.
\end{equation}

\begin{prop}\label{prop:energy_D}
Let $f,h\in C^2(\mathrm{T})$, and set $g=f\circ\Psi$ and
$q=h\circ\Psi$. Then
\begin{equation}\label{eq:conjugation_identity}
\la \wt{\CL}f,h\ra_{L^2(\mathrm{T},W)}
=
\la \CL g,q\ra_{L^2(Q,\CW)}.
\end{equation}
Consequently, by \eqref{eq:symmetry_square}, $\wt{\CL}$ is symmetric and
nonnegative on $C^2(\mathrm T)$ with respect to the weighted inner product.
Moreover,
\begin{align}\label{eq:energy_D}
\la \wt{\CL}f,f\ra_{L^2(\mathrm{T},W)}
=&
\int_\mathrm{T} |\Phi_x(x,y) \partial_x f(x,y)|^2\,W(x,y)\,\d x\,\d y
\\&+
\int_\mathrm{T} |\Phi_y(x,y) D_y f(x,y)|^2\,W(x,y)\,\d x\,\d y, \nonumber
\end{align}
where we recall $D_y f=\partial_y f-\frac{x}{2-y}\,\partial_x f$.
\end{prop}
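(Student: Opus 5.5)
The argument is a direct transfer through $\Psi$, using Lemma~\ref{lem:pullback} for the change of measure, Proposition~\ref{prop:energy_square_bilinear} for the energy on the square, and the pullback formulas \eqref{eq:derivs_xy}, \eqref{lem:Dy_is_dt}, \eqref{eq:Phi_pullback} to identify the integrands.

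\textbf{Step 1: the conjugation identity \eqref{eq:conjugation_identity}.} By the definition \eqref{eq:L_def}, $(\wt{\CL}f)\circ\Psi=\CL g$. The product $(\wt{\CL}f)\,h$ pulls back to $(\CL g)\,q$. The polarized form of Lemma~\ref{lem:pullback} rests on the pointwise identity $W(\Psi(s,t))(2-t)=\CW(s,t)$ established in its proof. Applying it gives
\[
\la \wt{\CL}f,h\ra_{L^2(\mathrm T,W)}=\int_Q (\CL g)\,q\,\CW\,\d s\,\d t=\la \CL g,q\ra_{L^2(Q,\CW)}.
\]
Since $\Psi$ is a diffeomorphism only on $(0,1]\times[0,1]$ (it collapses nothing except that $J=2-t\ge1>0$ everywhere), it is in fact a $C^\infty$ diffeomorphism of the closed square onto $\mathrm T$. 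Hence $g,q\in C^2(Q)$ whenever $f,h\in C^2(\mathrm T)$, and Proposition~\ref{prop:energy_square_bilinear} applies.

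\textbf{Step 2: symmetry and nonnegativity.} Applying \eqref{eq:symmetry_square} to the right side of \eqref{eq:conjugation_identity} and converting back by the same identity gives $\la\wt{\CL}f,h\ra=\la f,\wt{\CL}h\ra$. Nonnegativity follows from \eqref{eq:energy_square_bilinear} with $f=g$.

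\textbf{Step 3: the energy formula \eqref{eq:energy_D}.} By \eqref{eq:energy_square_bilinear},
\[
\la \CL g,g\ra=\int_Q\big(s(1-s)|\partial_s g|^2+t(1-t)|\partial_t g|^2\big)\CW.
\]
We identify each term as a pullback.
\begin{itemize}
\item \emph{The $s$-term.} By \eqref{eq:derivs_xy}, $\partial_s g=(2-t)(\partial_x f)\circ\Psi$. Hence $s(1-s)|\partial_s g|^2=(2-t)^2s(1-s)|(\partial_xf)\circ\Psi|^2$, which by \eqref{eq:Phi_pullback} equals $|\Phi_x\partial_x f|^2\circ\Psi$.
\item \emph{The $t$-term.} By \eqref{lem:Dy_is_dt}, $\partial_t g=(D_yf)\circ\Psi$. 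By \eqref{eq:Phi_pullback}, $t(1-t)=\Phi_y^2\circ\Psi$.
\end{itemize}
Applying Lemma~\ref{lem:pullback} in reverse to each integrand then yields \eqref{eq:energy_D}.

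The only point needing care is regularity. The factor $x/(2-y)$ in $D_y$ is smooth on $\mathrm T$ because $2-y\ge1$, so all integrands are bounded and continuous. The boundary-term justification is inherited from Proposition~\ref{prop:energy_square_bilinear} and need not be redone on $\mathrm T$. I expect no real obstacle: the main work is checking that the Jacobian factor cancels exactly, which Lemma~\ref{lem:pullback} already does.
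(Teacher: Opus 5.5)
Your proposal is correct and follows the paper's proof essentially step for step: you pull back via $\Psi$ using the pointwise identity $W(\Psi(s,t))(2-t)=\CW(s,t)$ from Lemma~\ref{lem:pullback} to get \eqref{eq:conjugation_identity}, you inherit symmetry and nonnegativity from Proposition~\ref{prop:energy_square_bilinear}, and you identify the two energy integrands via \eqref{eq:Phi_pullback}, \eqref{eq:derivs_st} and \eqref{lem:Dy_is_dt}. The only blemish is the self-contradictory parenthetical claiming $\Psi$ is a diffeomorphism ``only on $(0,1]\times[0,1]$''; delete it, since your next clause is the correct statement ($\Psi$ is a smooth diffeomorphism of the closed square onto $\mathrm T$ with smooth inverse $(x,y)\mapsto(x/(2-y),y)$), and that is all you need for $g,q\in C^2(Q)$.
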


\begin{proof}
Identity \eqref{eq:conjugation_identity} follows from the isometry
\eqref{eq:pullback_identity} and the definition \eqref{eq:L_def}. Together
with \eqref{eq:symmetry_square}, it gives the symmetry of $\wt{\CL}$.
To derive \eqref{eq:energy_D}, set $h=f$, so that $q=g$, and use
\eqref{eq:energy_square_bilinear}:
\[
\langle \CL g,g\rangle_{L^2(Q,\CW)}
=
\int_Q
\Big(
s(1-s)\,|\partial_s g(s,t)|^2
+
t(1-t)\,|\partial_t g(s,t)|^2
\Big)\CW(s,t)\,\d s\,\d t.
\]
By \eqref{eq:Phi_pullback}, \eqref{eq:derivs_st}, and
\eqref{lem:Dy_is_dt}, we have
\[
s(1-s) |\partial_s g|^2
=
(\Phi_x(\Psi))^2 |(\partial_x f)\circ\Psi|^2,
\]
and
\[
t(1-t) |\partial_t g|^2
=
(\Phi_y(\Psi))^2 |(D_y f)\circ\Psi|^2.
\]
Multiplying by $\CW(s,t)\,\d s\,\d t$ and using
\eqref{eq:pullback_identity} gives \eqref{eq:energy_D}. Nonnegativity follows
from the right-hand side of \eqref{eq:energy_D}.
\end{proof}

We now write $\wt{\CL}$ explicitly as a second-order operator in divergence form.

\begin{prop}\label{prop:div_form}
For $f \in C^2(\mathrm{T})$, we have
\begin{align}\label{eq:div_form}
\wt{\CL} f=&
-\frac{1}{x^\g(2-x-y)^\delta}\,\partial_x\Big(x^{\g+1}(2-x-y)^{\delta+1}\,\partial_x f\Big)
\\&-\frac{1}{y^\a(1-y)^\b}\,D_y\Big(y^{\a+1}(1-y)^{\b+1}\,D_y f\Big). \nonumber
\end{align}
\end{prop}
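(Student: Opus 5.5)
We prove \eqref{eq:div_form} by computing each of the two terms of $\CL g$, with $g=f\circ\Psi$, and pushing it back to $\mathrm T$ through $\Psi^{-1}$. The tools are the derivative relations \eqref{eq:derivs_xy} and \eqref{lem:Dy_is_dt}, together with the product forms \eqref{eq:Phi_pullback}. The $t$-part is essentially immediate. By \eqref{lem:Dy_is_dt}, $\partial_t$ acting on any function of the form $F\circ\Psi$ equals $(D_yF)\circ\Psi$. Apply this twice: first to $F=f$, and then to $F=y^{\a+1}(1-y)^{\b+1}D_yf$, whose pullback is $t^{\a+1}(1-t)^{\b+1}\partial_t g$ since $y=t$. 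Dividing by $t^\a(1-t)^\b=(y^\a(1-y)^\b)\circ\Psi$ then gives
\[
\CJ_t(w_{\a,\b},g(s,\bullet))=-\Big[\frac{1}{y^\a(1-y)^\b}D_y\big(y^{\a+1}(1-y)^{\b+1}D_yf\big)\Big]\circ\Psi .
\]

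For the $s$-part we use that, at fixed $t$, $\partial_s$ acting on $F\circ\Psi$ equals $(2-t)(\partial_xF)\circ\Psi$, by \eqref{eq:derivs_st}. The key step is to rewrite the weight so that the powers of $(2-t)$ match. Since $x=(2-t)s$ and $2-x-y=(2-t)(1-s)$,
\[
s^{\g+1}(1-s)^{\delta+1}\partial_s g=(2-t)^{-(\g+\delta+2)}\big[x^{\g+1}(2-x-y)^{\delta+1}\big]\circ\Psi\cdot(2-t)(\partial_xf)\circ\Psi .
\]
Write $G:=x^{\g+1}(2-x-y)^{\delta+1}\partial_xf$. The factor $(2-t)^{-(\g+\delta+1)}$ does not depend on $s$, so it passes through $\partial_s$, and
\[
\partial_s\big(s^{\g+1}(1-s)^{\delta+1}\partial_s g\big)=(2-t)^{-(\g+\delta+1)}(2-t)(\partial_xG)\circ\Psi .
\]
Finally we divide by $s^\g(1-s)^\delta=(2-t)^{-(\g+\delta)}\big[x^\g(2-x-y)^\delta\big]\circ\Psi$. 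All powers of $(2-t)$ cancel, leaving
\[
\CJ_s(w_{\g,\delta},g(\bullet,t))=-\Big[\frac{1}{x^\g(2-x-y)^\delta}\,\partial_x G\Big]\circ\Psi .
\]
This is exactly the first term of \eqref{eq:div_form}.

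Adding the two parts and composing with $\Psi^{-1}$ yields \eqref{eq:div_form} via the definition \eqref{eq:L_def}. The only delicate point is the bookkeeping of the $(2-t)$ powers in the $s$-part, which we have just checked cancel. Everything else is the chain rule already recorded in Section~\ref{sec:geom}. The identities hold in the interior of $\mathrm T$, which suffices because both sides are defined pointwise there.
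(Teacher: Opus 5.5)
Your proof is correct and follows essentially the same route as the paper's: you pull back via $g=f\circ\Psi$, use $\partial_s(F\circ\Psi)=(2-t)(\partial_xF)\circ\Psi$ and $\partial_t(F\circ\Psi)=(D_yF)\circ\Psi$, and check that the powers of $(2-t)$ coming from $s=x/(2-t)$ and $1-s=(2-x-y)/(2-t)$ cancel in the $s$-part. Your treatment of the $t$-part, applying the $D_y$ identity twice, is slightly more explicit than the paper's, which only says ``similarly.''
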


\begin{proof}
Let $g(s,t)=(f\circ\Psi)(s,t)=f((2-t)s,t)$. By definition,
\[
(\wt{\CL} f)\circ\Psi=\CL(f\circ\Psi)=\CJ_s\!\left(w_{\g,\delta},g(\bullet,t)\right) +\CJ_t\!\left(w_{\a,\b},g(s,\bullet)\right).
\]
We treat $\CJ_s\!\left(w_{\g,\delta},g(\bullet,t)\right)$ first. Using \eqref{eq:Js},
\[
\CJ_s\!\left(w_{\g,\delta},g(\bullet,t)\right)=-\frac{1}{w_{\g,\delta}(s)}\partial_s\big(w_{\g,\delta}(s)\,s(1-s)\,\partial_s g(s,t)\big).
\]
Since
$
\partial_s g(s,t)
=
(2-t)(\partial_x f \circ\Psi )(s,t),
$
\[
s^{\gamma+1}(1-s)^{\delta+1}\partial_s g(s,t)
=
(2-t)\,
s^{\gamma+1}(1-s)^{\delta+1}
(\partial_x f \circ\Psi )(s,t).
\]
Let $\tilde{f}(x,y):=(2-y)\frac{x^{\gamma+1}(2-x-y)^{\delta+1}}{(2-y)^{\gamma+\delta+2}} \partial_x f(x,y)$. Thus
\[
s^{\gamma+1}(1-s)^{\delta+1}\partial_s g(s,t)
= (\tilde{f} \circ\Psi)(s,t).
\]
Differentiating once more with respect to $s$, we obtain
\[
\partial_s
\left(
s^{\gamma+1}(1-s)^{\delta+1}\partial_s g
\right)
=
(2-t)
\Big[
\partial_x
\Big(
\frac{x^{\gamma+1}(2-x-y)^{\delta+1}}{(2-y)^{\gamma+\delta+1}} \partial_x f
\Big)
\Big]\circ\Psi .
\]
Substituting into the definition of the $s$-component yields
\begin{align*}
\CJ_s\!\left(w_{\g,\delta},g(\bullet,t)\right)
&=
-(2-t)
\frac{1}{s^\gamma(1-s)^\delta}
\Big[
\partial_x
\Big(
\frac{x^{\gamma+1}(2-x-y)^{\delta+1}}{(2-y)^{\gamma+\delta+1}} \partial_x f
\Big)
\Big]\circ\Psi \\ &= \left[-\frac{1}{x^\g(2-x-y)^\delta}\,\partial_x\Big(x^{\g+1}(2-x-y)^{\delta+1}\,\partial_x f \Big) \right] \circ\Psi.
\end{align*}
Hence
\[
\left[\CJ_s\!\left(w_{\g,\delta},g(\bullet,t)\right)\right]\circ\Psi^{-1}(x,y)
=
-\frac{1}{x^\g(2-x-y)^\delta}\,\partial_x\Big(x^{\g+1}(2-x-y)^{\delta+1}\,\partial_x f\Big).
\]
Similarly, using \eqref{eq:Js} and \eqref{lem:Dy_is_dt}, we obtain
\[
\left[\CJ_t\!\left(w_{\a,\b},g(s,\bullet)\right)\right]\circ\Psi^{-1}(x,y)
=
-\frac{1}{y^\a(1-y)^\b}\,D_y\Big(y^{\a+1}(1-y)^{\b+1}\,D_y f\Big).
\]
This completes the proof of \eqref{eq:div_form}.
\end{proof}

We next introduce an orthogonal polynomial basis on $\mathrm{T}$
adapted to $\Psi$, following the construction from one-variable
orthogonal systems described in \cite{Koornwinder1975,DunklXu}. The factor $(2-y)^k$ appearing below removes the denominator
introduced by $x/(2-y)$ and ensures that the resulting functions are algebraic
polynomials in $(x,y)$. For each $k \in \NN_0$ let $\big\{\mathsf{P}_{m,k}^{(\a,\b)}\big\}^\infty_{m=0}$ denote the system of orthonormal polynomials on the interval $[0,1]$ with respect to the weight function $(2-y)^{2k} w_{\a,\b}(y)$.
For $n\in\NN_0$ and $0\le k\le n$, define
\begin{equation}\label{eq:Pmk}
\PP^n_k(x,y)
:=
(2-y)^k\,\mathsf{P}_k^{(\g,\delta)}\!\left(\frac{x}{2-y}\right)\,
\mathsf{P}_{n-k,k}^{(\a,\b)}(y).
\end{equation}
By definition $\PP^n_k$ is a polynomial of total degree $n$. Denote by $\CV_n(\mathrm{T},W)$ the space of polynomials of total degree $n$ that are orthogonal on $\mathrm{T}$ with respect to $W$.

\begin{prop}\label{thm:diag_D}
The family $\{\PP^n_k : 0 \leq k \leq n\}$ is a mutually orthogonal basis of $\CV_n(\mathrm T,W)$.
Moreover,
\begin{align*}
\wt{\CL} \PP^n_0=n(n+\a + \b + 1)\,\PP^n_0.
\end{align*}
\end{prop}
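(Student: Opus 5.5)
We first check that each $\PP^n_k$ is a polynomial of total degree $n$. Then we prove orthogonality by pulling everything back to the square with Lemma \ref{lem:pullback}. The eigenvalue relation for $k=0$ follows from the tensor structure of $\CL$.

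\emph{Polynomiality and degree.} Write $\mathsf P_k^{(\g,\delta)}(u)=\sum_{j\le k}a_ju^j$ with $a_k\neq0$. Then
\[
(2-y)^k\mathsf P_k^{(\g,\delta)}\bigl(x/(2-y)\bigr)=\sum_{j\le k}a_jx^j(2-y)^{k-j},
\]
which is a polynomial of total degree $k$ whose top homogeneous part is $\sum_j a_j x^j(-y)^{k-j}$. This part is nonzero because of the term $a_kx^k$. Multiplying by $\mathsf P_{n-k,k}^{(\a,\b)}(y)$, which has degree exactly $n-k$, gives total degree exactly $n$, since the leading forms multiply to a nonzero form of degree $n$.

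\emph{Orthogonality.} For $(n,k)$ and $(m,l)$, Lemma \ref{lem:pullback} gives $\PP^n_k\circ\Psi=(2-t)^k\mathsf P_k^{(\g,\delta)}(s)\mathsf P^{(\a,\b)}_{n-k,k}(t)$, so
\[
\la \PP^n_k,\PP^m_l\ra_{L^2(\mathrm T,W)}=\int_0^1\mathsf P_k^{(\g,\delta)}\mathsf P_l^{(\g,\delta)}w_{\g,\delta}\,\d s\int_0^1(2-t)^{k+l}\mathsf P^{(\a,\b)}_{n-k,k}\mathsf P^{(\a,\b)}_{m-l,l}w_{\a,\b}\,\d t.
\]
If $k\ne l$, the $s$-integral vanishes. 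If $k=l$, the $t$-integral is the inner product with weight $(2-t)^{2k}w_{\a,\b}$, so it vanishes unless $n=m$, and then it equals $1$. Hence all $\PP^n_k$, over all $n$ and $k$, are mutually orthogonal and nonzero.

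\emph{Basis of $\CV_n(\mathrm T,W)$.} Each $\PP^n_k$ has degree $n$ and is orthogonal to every $\PP^m_l$ with $m<n$. A dimension count shows that $\{\PP^m_l: m\le j\}$ consists of $\dim\Pi_j$ linearly independent elements of $\Pi_j$, so it is a basis of $\Pi_j$. Induction on $j$ then shows that $\PP^n_k\perp\Pi_{n-1}$, so $\PP^n_k\in\CV_n(\mathrm T,W)$. The $n+1$ elements $\PP^n_0,\dots,\PP^n_n$ are orthogonal, hence independent, and $\dim\CV_n=n+1$, so they form a basis.

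\emph{Eigenvalue.} For $k=0$, $\mathsf P_0^{(\g,\delta)}$ is constant. The weight $(2-t)^0w_{\a,\b}=w_{\a,\b}$, so $\mathsf P_{n,0}^{(\a,\b)}$ is a constant multiple of $\mathsf P_n^{(\a,\b)}$. Therefore $g=\PP^n_0\circ\Psi=c\,\mathsf P_n^{(\a,\b)}(t)$ does not depend on $s$. Its $\CJ_s$ part vanishes, and by \eqref{eq:1d_eigs} its $\CJ_t$ part equals $n(n+\a+\b+1)g$. Applying the definition \eqref{eq:L_def} gives the claim.

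The only mildly delicate point is the basis step, namely that orthogonality to all lower-degree elements together with the dimension count yields membership in $\CV_n(\mathrm T,W)$. The induction above handles it.
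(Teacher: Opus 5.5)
Your proof is correct. The eigenvalue part coincides with the paper's argument. $\PP^n_0\circ\Psi$ depends only on $t$, and it is a multiple of $\mathsf P_n^{(\a,\b)}(t)$ because the weight defining $\mathsf P_{n,0}^{(\a,\b)}$ is just $w_{\a,\b}$. Hence the $\CJ_s$ part vanishes and \eqref{eq:1d_eigs} gives the eigenvalue.

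The only difference is in the orthogonality and basis part. The paper does not prove this part; it takes it from Proposition~2.6.1 of Dunkl--Xu, the general construction of orthogonal bases from one-variable systems. You verify it directly instead. Lemma~\ref{lem:pullback}, used in polarized form, factors $\la\PP^n_k,\PP^m_l\ra_{L^2(\mathrm T,W)}$ into an $s$-integral against $w_{\g,\delta}$ and a $t$-integral against $(2-t)^{k+l}w_{\a,\b}$, and a dimension count finishes. Your version makes the statement self-contained. It also shows exactly why the factor $(2-y)^k$ in \eqref{eq:Pmk} has to be paired with the modified weights $(2-y)^{2k}w_{\a,\b}$. The citation buys only brevity.

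Two cosmetic points:
\begin{enumerate}
\item The formula for $\PP^n_k\circ\Psi$ comes from \eqref{eq:Pmk}, not from Lemma~\ref{lem:pullback}.
\item The induction in the basis step is unnecessary. The count $\sum_{m=0}^{n-1}(m+1)=\dim\Pi_{n-1}$ already shows that $\{\PP^m_l: m\le n-1\}$ spans $\Pi_{n-1}$, and therefore $\PP^n_k\perp\Pi_{n-1}$.
\end{enumerate}
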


\begin{proof}
By construction,
$
\PP^n_0(\Psi(s,t))=\mathsf{P}_0^{(\g,\delta)}\!\left(s\right)\,
\mathsf{P}_{n,0}^{(\a,\b)}(t)
$.
Then, using \eqref{eq:Js} and Proposition  \ref{prop:2d},
\begin{align*}
(\wt{\CL} \PP^n_{0})\circ\Psi= \CL(\PP^n_{0}\circ\Psi)=\mathsf{P}_{0}^{(\g,\delta)}(s) \,\CJ_t \left(w_{\a,\b},\mathsf{P}_{n,0}^{(\a,\b)}\right)=\lambda^{(\a,\b)}_n
\PP^n_{0}\circ\Psi.
\end{align*}
Orthogonality and completeness follow from Proposition 2.6.1 of \cite{DunklXu}.
\end{proof}
\begin{rem}
In general, the polynomials
$\PP_k^n$ are not eigenfunctions of $\wt{\CL}$. The endpoint $k=0$ is an
exception, since $\mathsf P_{n,0}^{(\a,\b)}$ is, up to normalization, the
shifted Jacobi polynomial $\mathsf P_n^{(\a,\b)}$.
For $k=n$, however,
\[
(\PP_n^n\circ\Psi)(s,t)
=
c_n(2-t)^n\mathsf P_n^{(\g,\delta)}(s),
\]
where $c_n\ne0$ is a constant, and the $t$-part of $\CL$ acts nontrivially
on $(2-t)^n$. Consequently, $\PP_n^n$ is not an eigenfunction of
$\wt{\CL}$ for $n\ge1$.

More generally, $\wt{\CL}$ does not preserve the algebraic polynomial spaces
$\Pi_n$. For instance,
\begin{equation*}
\wt{\CL}x
=
(\g+\delta+2)x-(\g+1)(2-y)
+
\frac{x}{2-y}
\big(\a+1-(\a+\b+2)y\big),
\end{equation*}
which is not a polynomial for $\a,\b>-1$. Thus, unlike on the simplex and
the unit ball, the sharp Bernstein inequalities below do not arise from a spectral decomposition of $\wt{\CL}$.

The two endpoint elements $\PP_0^n$ and $\PP_n^n$ of the orthogonal basis
\eqref{eq:Pmk} will nevertheless play a distinguished role. Their behavior
with respect to $\wt{\CL}$ is essentially different: $\PP_0^n$ is an
eigenfunction of $\wt{\CL}$, whereas $\PP_n^n$ not an eigenfunction for $n \geq 1$.
Nevertheless, both endpoint polynomials turn out to be extremizers:
$\PP_0^n$ for the inequality involving $\Phi_yD_y$ and $\PP_n^n$ for the
inequality involving $\Phi_x\partial_x$. Hence the sharpness of the two
inequalities cannot be attributed solely to the spectral properties of
$\wt{\CL}$. Instead, the extremal behavior is inherited separately from
the one-dimensional Jacobi structure in the $t$- and $s$-variables,
respectively, under the parametrization $\Psi$.
\end{rem}

\section{Sharp weighted \(L^2\) Bernstein inequalities on T}
The one-dimensional sharp Bernstein inequalities for Jacobi polynomials
can now be transferred to $\mathrm{T}$ through the parametrization $\Psi$.
Indeed,
\[
(\Phi_x\partial_x f)\circ\Psi
=
\sqrt{s(1-s)}\,\partial_s(f\circ\Psi),
\qquad
(\Phi_yD_y f)\circ\Psi
=
\sqrt{t(1-t)}\,\partial_t(f\circ\Psi).
\]
This leads to the following sharp inequalities.

\begin{prop}\label{prop:bernstein_T}
Let $\a,\b,\g,\delta>-1$ and $n\in\NN$. Then, for every $f\in\Pi_n$,
\begin{align}
\|\Phi_x\partial_x f\|_{L^2(\mathrm{T},W)}^2
&\le
n(n+\g+\delta+1)
\|f\|_{L^2(\mathrm{T},W)}^2,
\label{eq:bernstein_T_x}
\\[0.5em]
\|\Phi_yD_y f\|_{L^2(\mathrm{T},W)}^2
&\le
n(n+\a+\b+1)
\|f\|_{L^2(\mathrm{T},W)}^2.
\label{eq:bernstein_T_y}
\end{align}
Both constants are sharp. Equality in \eqref{eq:bernstein_T_x} is attained
by $\PP_n^n$, whereas equality in \eqref{eq:bernstein_T_y} is attained
by $\PP_0^n$.
\end{prop}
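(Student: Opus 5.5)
We pull everything back to the square via $\Psi$ and reduce to the one-variable inequalities \eqref{eq:Bernstein_square_s} and \eqref{eq:Bernstein_square_t} of Theorem~\ref{BforSq}. Let $f\in\Pi_n$ and set $g=f\circ\Psi$, so $g(s,t)=f((2-t)s,t)$.

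The first step is to bound the degrees of $g$. A monomial $x^iy^j$ with $i+j\le n$ becomes $(2-t)^is^it^j$. Hence $\deg_s g\le n$. Also $(2-t)^it^j$ has degree $i+j\le n$ in $t$, so $\deg_t g\le n$. These bounds are all that Theorem~\ref{BforSq} requires.

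Next, by Lemma~\ref{lem:pullback} applied to $\Phi_x\partial_x f$ and to $f$, together with the identity $(\Phi_x\partial_x f)\circ\Psi=\sqrt{s(1-s)}\,\partial_s g$ from \eqref{eq:Phi_pullback} and \eqref{eq:derivs_xy}, we get
\[
\|\Phi_x\partial_x f\|^2_{L^2(\mathrm T,W)}=\|\sqrt{s(1-s)}\,\partial_s g\|^2_{L^2(Q,\CW)},\qquad \|f\|_{L^2(\mathrm T,W)}=\|g\|_{L^2(Q,\CW)}.
\]
Applying \eqref{eq:Bernstein_square_s} then gives \eqref{eq:bernstein_T_x}. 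In the same way, \eqref{lem:Dy_is_dt} gives $(\Phi_yD_yf)\circ\Psi=\sqrt{t(1-t)}\,\partial_tg$, and \eqref{eq:Bernstein_square_t} yields \eqref{eq:bernstein_T_y}.

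For sharpness in \eqref{eq:bernstein_T_y}, note that $\PP_0^n\circ\Psi=c\,\mathsf P_n^{(\a,\b)}(t)$, since $\mathsf P_{n,0}^{(\a,\b)}$ is a multiple of the shifted Jacobi polynomial. Its $t$-energy therefore equals $n(n+\a+\b+1)$ times its norm squared, by the equality case of \eqref{eq:1d_bernstein}, or directly by the eigenrelation in Proposition~\ref{thm:diag_D} combined with \eqref{eq:energy_D}. Here the $s$-energy vanishes because $\partial_s g=0$.

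For \eqref{eq:bernstein_T_x}, $\PP_n^n\circ\Psi=c_n(2-t)^n\mathsf P_n^{(\g,\delta)}(s)$ is a product $a(t)b(s)$. The $s$-energy then factors as
\[
\int_0^1 |a|^2w_{\a,\b}\,\d t\cdot\int_0^1 s(1-s)|b'|^2w_{\g,\delta}\,\d s.
\]
Likewise, the norm factors as $\int_0^1|a|^2w_{\a,\b}\,\d t\cdot\int_0^1|b|^2w_{\g,\delta}\,\d s$. The ratio is therefore exactly the one-dimensional ratio for $b=\mathsf P_n^{(\g,\delta)}$, which is $n(n+\g+\delta+1)$ by \eqref{eq:1d_eigs} after one integration by parts.

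I expect the main obstacle to be the degree bookkeeping in the first step, specifically noticing that the total degree bound transfers to the separate bounds $\deg_s g\le n$ and $\deg_t g\le n$ even though $g$ is not in $\Pi_n$ on the square. The rest is direct transfer through $\Psi$.
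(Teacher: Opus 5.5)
Your proposal is correct and follows essentially the same route as the paper: pull back via $\Psi$ using Lemma~\ref{lem:pullback}, \eqref{eq:Phi_pullback} and \eqref{lem:Dy_is_dt}; note that $\deg_s g\le n$ and $\deg_t g\le n$; apply \eqref{eq:Bernstein_square_s} and \eqref{eq:Bernstein_square_t}; and get sharpness from the product structure of $\PP_n^n\circ\Psi$ and $\PP_0^n\circ\Psi$. Your monomial check of the degree bounds, and your remark that $g$ can have total degree up to $2n$ so only the separate-variable estimates apply, spell out what the paper states in one line.
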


\begin{proof}
Let $g=f\circ\Psi$. Since $f\in\Pi_n$, we have
$
\deg_s g\le n,
\deg_t g\le n.
$
By \eqref{eq:Phi_pullback}, \eqref{eq:derivs_st}, \eqref{lem:Dy_is_dt}, and the pullback identity,
\[
\|\Phi_x\partial_x f\|_{L^2(\mathrm{T},W)}^2
=
\|\sqrt{s(1-s)}\,\partial_s g\|_{L^2(Q,\CW)}^2
\]
and
\[
\|\Phi_yD_y f\|_{L^2(\mathrm{T},W)}^2
=
\|\sqrt{t(1-t)}\,\partial_t g\|_{L^2(Q,\CW)}^2.
\]
The two inequalities now follow directly from
\eqref{eq:Bernstein_square_s} and \eqref{eq:Bernstein_square_t},
respectively.

It remains to verify sharpness. From \eqref{eq:Pmk},
\[
(\PP_n^n\circ\Psi)(s,t)
=
(2-t)^n
\mathsf P_n^{(\g,\delta)}(s)
\mathsf P_{0,n}^{(\a,\b)}(t).
\]
Since $\mathsf P_{0,n}^{(\a,\b)}$ is a nonzero constant, for every fixed
$t$ the dependence on $s$ is a scalar multiple of
$\mathsf P_n^{(\g,\delta)}(s)$. Equality therefore holds in the one-dimensional Bernstein inequality
for Jacobi polynomials used above, and hence in \eqref{eq:bernstein_T_x}. Thus the constant
$n(n+\g+\delta+1)$ is sharp. Similarly,
$
(\PP_0^n\circ\Psi)(s,t)
=
\mathsf P_0^{(\g,\delta)}(s)
\mathsf P_{n,0}^{(\a,\b)}(t).
$
For $k=0$, the weight defining $\mathsf P_{n,0}^{(\a,\b)}$ is simply
$w_{\a,\b}$; hence $\mathsf P_{n,0}^{(\a,\b)}$ is, up to normalization,
the shifted Jacobi polynomial of degree $n$. Therefore equality is attained in the one-dimensional sharp Bernstein inequality for Jacobi polynomials in the \(t\)-variable, and consequently also in \eqref{eq:bernstein_T_y}. This proves the sharpness of the constant \(n(n+\a+\b+1)\).
\end{proof}

The sharp estimates above are expressed in terms of the weight $W$, which contains
the factor $(2-y)^{-1}$ arising from the Jacobian of the parametrization $\Psi$.
It is natural to examine the corresponding estimate after removing this factor
from the measure, in particular with a view toward the unweighted case.
The boundary factor that appears in this reformulation will also be of
independent interest, as it will later be shown to have a nontrivial
interpretation in terms of the Siciak extremal function of $\mathrm{T}$.

\begin{cor}\label{cor}
Let $\a,\b,\g,\delta>-1$ and $n\in\NN$. Define
\[
\widetilde W(x,y)
:=
(2-y)W_{\a,\b,\g,\delta}(x,y)
=
x^\g(2-x-y)^\delta y^\a(1-y)^\b
(2-y)^{-(\g+\delta)}
\]
and
\[
\widetilde\Phi_x(x,y)
:=
\frac{\Phi_x(x,y)}{\sqrt{2-y}}
=
\sqrt{\frac{x(2-x-y)}{2-y}}.
\]
Then, for every $f\in\Pi_n$,
\begin{equation}\label{DxT}
\|\widetilde\Phi_x\partial_xf\|_{L^2(\mathrm{T},\widetilde W)}
\le
\sqrt{n(n+\g+\delta+1)}\,
\|\widetilde f\|_{L^2(\mathrm{T},\widetilde W)},
\end{equation}
where
\[
\widetilde f(x,y):=\frac{f(x,y)}{\sqrt{2-y}}.
\]
The constant is sharp, and equality is attained for $f=\PP_n^n$.
\end{cor}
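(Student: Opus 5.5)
The plan is to show that \eqref{DxT} is simply \eqref{eq:bernstein_T_x} from Proposition~\ref{prop:bernstein_T}, rewritten after moving the factor $(2-y)$ between the weight and the functions. No new estimate should be needed.

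First I will rewrite the left-hand side. Since $\widetilde\Phi_x^2\,\widetilde W=\frac{\Phi_x^2}{2-y}\,(2-y)W=\Phi_x^2W$ pointwise on $\mathrm T$, we get
\[
\|\widetilde\Phi_x\partial_x f\|_{L^2(\mathrm T,\widetilde W)}^2=\int_{\mathrm T}|\Phi_x\partial_x f|^2\,W\,\d x\,\d y=\|\Phi_x\partial_x f\|_{L^2(\mathrm T,W)}^2 .
\]
Next I will rewrite the right-hand side in the same way. Since $|\widetilde f|^2\,\widetilde W=\frac{|f|^2}{2-y}(2-y)W=|f|^2W$, we get
\[
\|\widetilde f\|_{L^2(\mathrm T,\widetilde W)}=\|f\|_{L^2(\mathrm T,W)} .
\]
Both identities hold for arbitrary measurable $f$, because $2-y\in[1,2]$ on $\mathrm T$. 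So all the integrals involved are finite exactly when the original ones are, and no integrability issue arises.

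With these two identities, \eqref{DxT} becomes literally \eqref{eq:bernstein_T_x} after taking square roots, and I will conclude by citing that inequality for $f\in\Pi_n$. Sharpness carries over because the two identities are equalities for every $f$, including $f=\PP_n^n$. Proposition~\ref{prop:bernstein_T} already gives equality in \eqref{eq:bernstein_T_x} for $\PP_n^n$, so equality holds in \eqref{DxT} as well, and the constant cannot be lowered.

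There is no real obstacle here. The only point needing care is that $f$ must not be replaced by $\widetilde f$ on the left-hand side. The derivative is $\partial_x f$, and since $\widetilde f=f/\sqrt{2-y}$ with the factor $(2-y)^{-1/2}$ independent of $x$, the $\partial_x$ term is unaffected by the renormalization; all of the compensating $(2-y)$ factor is absorbed by $\widetilde\Phi_x$.
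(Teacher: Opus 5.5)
Your proposal is correct and matches the paper's approach. The paper states the corollary without a written proof, directly after Proposition~\ref{prop:bernstein_T}, and it is exactly \eqref{eq:bernstein_T_x} rewritten through the pointwise identities $\widetilde\Phi_x^2\widetilde W=\Phi_x^2W$ and $|\widetilde f|^2\widetilde W=|f|^2W$, with sharpness inherited from the extremizer $\PP_n^n$, just as you argue.
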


\begin{rem}
It is worth noting that the sharpness of \eqref{DxT}  is obtained
after the factor $(2-y)^{-1/2}$ is incorporated into the function. If one
keeps the polynomial $f$ itself on the right-hand side, the same argument
still yields a Bernstein inequality, but it does not determine its optimal
constant.

Indeed, since
$
\frac{1}{\sqrt{2}}\,
\|f\|_{L^2(\mathrm{T},\wt W)}
\le
\|\wt f\|_{L^2(\mathrm{T},\wt W)}
\le
\|f\|_{L^2(\mathrm{T},\wt W)},
$
Corollary~\ref{cor} gives
\[
\sqrt{\frac{n(n+\g+\delta+1)}{2}}
\le
\sup_{\substack{f\in\Pi_n\\ f\neq0}}
\frac{
\|\wt\Phi_x\partial_xf\|_{L^2(\mathrm{T},\wt W)}
}{
\|f\|_{L^2(\mathrm{T},\wt W)}
}
\le
\sqrt{n(n+\g+\delta+1)}.
\]
Moreover, the extremal polynomial $\PP_n^n$ from Corollary~\ref{cor}
satisfies
\[
\lim_{n\to\infty}
\frac{
\|\wt\Phi_x\partial_x\PP_n^n\|_{L^2(\mathrm{T},\wt W)}
}{
\sqrt{n(n+\g+\delta+1)}
\|\PP_n^n\|_{L^2(\mathrm{T},\wt W)}
}
=
\frac{1}{\sqrt2}.
\]
Thus, when measured against the norm of the polynomial itself, the extremizer
of the weighted formulation asymptotically attains the lower bound in the
above two-sided estimate. The last limit follows from
Corollary~\ref{cor} and the following lemma.
\end{rem}

\begin{lem}\label{lem:weighted-extremizer-limit}
Let
\[
\wt{\PP_n^{n}}(x,y):=\frac{\PP_n^{n}(x,y)}{\sqrt{2-y}}.
\]
Then
\[
\lim_{n\to\infty}
\frac{\big\|\wt{\PP_n^{n}}\big\|^2_{L^2(\mathrm{T},\wt W)}}
{\|\PP_n^{n}\|^2_{L^2(\mathrm{T},\wt W)}}
=
\lim_{n\to\infty}
\frac{\displaystyle\int_0^1(2-y)^{2n}w_{\a,\b}(y)\,\d y}
{\displaystyle\int_0^1(2-y)^{2n+1}w_{\a,\b}(y)\,\d y}
=
\frac12.
\]
\end{lem}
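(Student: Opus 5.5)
The plan is to compute both norms of $\PP_n^n$ in the square coordinates via $\Psi$, reduce the ratio to a ratio of one-dimensional integrals in $y=t$, and then evaluate that ratio by a Laplace-type concentration argument at $t=0$.

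\emph{Reduction to square coordinates.} By \eqref{eq:Pmk} and the computation in the proof of Proposition~\ref{prop:bernstein_T}, we have $(\PP_n^n\circ\Psi)(s,t)=c_n(2-t)^n\mathsf P_n^{(\g,\delta)}(s)$ with $c_n=\mathsf P_{0,n}^{(\a,\b)}\ne0$ constant. Since $\wt W=(2-y)W$, Lemma~\ref{lem:pullback} gives, for any $h$,
\[
\int_{\mathrm T}|h|^2\wt W\,\d x\,\d y=\int_Q |h\circ\Psi|^2(2-t)\,\CW(s,t)\,\d s\,\d t .
\]
Apply this to $h=\PP_n^n$ and to $h=\wt{\PP_n^n}$, noting that $|\wt{\PP_n^n}\circ\Psi|^2=|\PP_n^n\circ\Psi|^2/(2-t)$. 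The integrand factorizes, and the common factor $c_n^2\int_0^1|\mathsf P_n^{(\g,\delta)}(s)|^2w_{\g,\delta}(s)\,\d s$ cancels. The ratio of squared norms is therefore exactly
\[
\frac{\int_0^1(2-t)^{2n}w_{\a,\b}(t)\,\d t}{\int_0^1(2-t)^{2n+1}w_{\a,\b}(t)\,\d t},
\]
which is the first equality.

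\emph{Evaluating the limit.} Write $I_m=\int_0^1(2-t)^m w_{\a,\b}(t)\,\d t$; we must show $I_{2n}/I_{2n+1}\to 1/2$. Since $I_{2n+1}=\int(2-t)(2-t)^{2n}w_{\a,\b}$, the ratio $I_{2n+1}/I_{2n}$ is the mean of $2-t$ under the probability measure $\mu_n\propto(2-t)^{2n}w_{\a,\b}(t)\,\d t$. It is bounded by $2$ from above, so it suffices to show that $\mu_n$ concentrates at $t=0$, i.e. $\mathbb E_{\mu_n}[t]\to0$.

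\emph{Concentration (main step).} Fix $\epsilon\in(0,1)$ and split $[0,1]$ at $\epsilon$. On $[\epsilon,1]$ the numerator mass is at most $(2-\epsilon)^{2n}\int_0^1 w_{\a,\b}$. On the denominator side, the mass is at least
\[
\int_0^{\epsilon/2}(2-t)^{2n}w_{\a,\b}\ge(2-\epsilon/2)^{2n}\,c_\epsilon,\qquad c_\epsilon>0,
\]
where $c_\epsilon$ is finite and positive because $\a,\b>-1$. Hence
\[
\mu_n([\epsilon,1])\le C_\epsilon\left(\frac{2-\epsilon}{2-\epsilon/2}\right)^{2n}\to0 .
\]
Consequently $\mathbb E_{\mu_n}[t]\le\epsilon+\mu_n([\epsilon,1])$, so $\limsup_n \mathbb E_{\mu_n}[t]\le\epsilon$. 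Letting $\epsilon\to0$ gives $\mathbb E_{\mu_n}[2-t]\to2$, which yields the limit $1/2$.

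The only delicate point is keeping the weight integrability uniform, and this is handled by the crude bounds above. Alternatively, one could substitute $t=u/n$ and apply dominated convergence to get the precise asymptotics $I_m\sim 2^m\Gamma(\a+1)(m/2)^{-\a-1}$, but this is not needed.
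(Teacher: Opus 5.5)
Your proposal is correct and takes essentially the same route as the paper. After pulling back by $\Psi$ the $s$-factors cancel, and the limit comes from splitting $[0,1]$ at $\varepsilon$ and showing that the tail $\int_\varepsilon^1(2-y)^{2n}w_{\a,\b}$ is exponentially small against the mass on $[0,\eta]$. You take $\eta=\varepsilon/2$ where the paper allows any $\eta<\varepsilon$, and your phrasing via the mean of $2-t$ under $\mu_n$ is just a repackaging of the paper's sandwich $\frac12\le R_n\le \frac{1}{2-\varepsilon}+\text{(tail ratio)}$.
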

\begin{proof}
Using \eqref{eq:Pmk} and the change of variables \eqref{eq:Psi}, the factors
depending on $s$ cancel in the quotient of the two norms, which gives the
integral representation in the statement.

For fixed $\varepsilon\in(0,1)$,
\[
\frac12\le \frac{1}{2-y}\le \frac{1}{2-\varepsilon}
\qquad\text{for }y\in[0,\varepsilon],
\quad\text{and}\quad
\frac{1}{2-\varepsilon}\le \frac{1}{2-y}\le1
\qquad\text{for }y\in[\varepsilon,1].
\]
Set
\[
R_n:=
\frac{\big\|\wt{\PP_n^{n}}\big\|^2_{L^2(\mathrm{T},\wt W)}}
{\|\PP_n^{n}\|^2_{L^2(\mathrm{T},\wt W)}}.
\]
Then
\[
\frac12
\le R_n
\le
\frac{1}{2-\varepsilon}
+
\frac{\displaystyle\int_\varepsilon^1(2-y)^{2n}w_{\a,\b}(y)\,\d y}
{\displaystyle\int_0^1(2-y)^{2n+1}w_{\a,\b}(y)\,\d y}.
\]
For any fixed $\eta\in(0,\varepsilon)$,
\[
0\le
\frac{\displaystyle\int_\varepsilon^1(2-y)^{2n}y^\a(1-y)^\b\,\d y}
{\displaystyle\int_0^1(2-y)^{2n+1}y^\a(1-y)^\b\,\d y}
\le
\left(\frac{2-\varepsilon}{2-\eta}\right)^{2n}
\frac{\displaystyle\int_\varepsilon^1y^\a(1-y)^\b\,\d y}
{\displaystyle\int_0^\eta(2-y)y^\a(1-y)^\b\,\d y}
\longrightarrow0.
\]
Thus,
\[
\frac12
\le
\liminf_{n\to\infty}R_n
\le
\limsup_{n\to\infty}R_n
\le
\frac{1}{2-\varepsilon}.
\]
Letting $\varepsilon\downarrow0$, we conclude that $R_n\to\frac12$.
\end{proof}
We next turn to the ordinary derivative $\partial_y$. In this direction the
geometry of $\mathrm{T}$ is more subtle, since the relevant boundary behavior
changes across the line $x=1$. Accordingly, the natural estimate involves a
piecewise defined weight and an additional first-order term supported on the
triangular region $\{(x,y)\in\mathrm{T}:x\ge1\}$. This leads to a strengthened
Bernstein inequality for $\partial_y$ with the boundary factor
$\widetilde{\Phi}_y$ defined below.

The appearance of $\widetilde{\Phi}_y$ is particularly relevant for the
subsequent pluripotential interpretation. Although its relation with the
Siciak extremal function is not an exact identity as in the $x$-direction,
$\widetilde{\Phi}_y$ is comparable, up to an absolute constant, with the
reciprocal of the corresponding Dini derivative.
\begin{prop}\label{prop:bernstein_y}
Let $\a > -1$, and for $(x,y) \in \mathrm{T}$ let
\[
Y = Y_{\a}(x,y)=
\begin{cases}
y^\a (1-y), & \text{if } x\le 1,\\[4pt]
y^\a , & \text{if } 1 < x.
\end{cases}
\]
Define, for $(x,y) \in \mathrm{T}$,
\begin{equation*}
\wt{\Phi}_y(x,y):= \min \left\{\sqrt{y(1-y)}, \frac{\sqrt{y}\sqrt{2 - x - y} }{\sqrt{2 - x}} \right\}.
\end{equation*}
Let $\triangle=\{(x,y) \in \mathbb{R}^2 : 0 \leq y \leq 1, \, 1 \leq x \leq 2-y\}$. Then, for each $f \in \Pi_n$, we have
\begin{equation}\label{DyT}
\|Hf\|^2_{L^2(\triangle,Y)} + \|\wt{\Phi}_y \partial_y f\|^2_{L^2(\mathrm{T},Y)} \leq n(n+\a +2) {\| f\|}^2_{L^2(\mathrm{T},Y)},
\end{equation}
where
\begin{align*}
&Hf(x,y)=\frac{\sqrt{x-1}}{\sqrt{2-x}}
\bigl((x-2)\partial_x f(x,y)+y\partial_y f(x,y)\bigr).
\end{align*}
Equality is attained for $f(x,y)=\mathsf P_n^{(\a,1)}(y)$.
\end{prop}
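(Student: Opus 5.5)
The plan is to split $\mathrm T$ along the line $x=1$ into the square $R=[0,1]^2=\{(x,y)\in\mathrm T:x\le1\}$ and the triangle $\triangle$. The square carries the weight $y^\a(1-y)$ and the triangle the weight $y^\a$. On each piece I prove a separate Bernstein inequality with the same constant $n(n+\a+2)$, and then add the two.

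First I identify the minimum in $\wt\Phi_y$ explicitly. We have
\[
\frac{y(2-x-y)}{2-x}\ge y(1-y)\iff 2-x-y\ge(2-x)(1-y)\iff y(1-x)\ge0 .
\]
Hence $\wt\Phi_y=\sqrt{y(1-y)}$ on $R$ and $\wt\Phi_y=\sqrt{y(2-x-y)/(2-x)}$ on $\triangle$, with no slack in either region.

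\emph{The square $R$.} For each fixed $x\in[0,1]$, $y\mapsto f(x,y)$ is a polynomial of degree at most $n$. Apply \eqref{eq:1d_bernstein} with $(a,b)=(\a,1)$ and integrate in $x$:
\[
\int_R y(1-y)|\partial_y f|^2\,y^\a(1-y)\,\d x\,\d y\le n(n+\a+2)\int_R|f|^2\,y^\a(1-y)\,\d x\,\d y .
\]
Equality holds when $f=\mathsf P_n^{(\a,1)}(y)$.

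\emph{The triangle $\triangle$.} Set $u=x-1$. Then $\triangle$ becomes the standard simplex $\Delta^2=\{u,y\ge0,\ u+y\le1\}$ with Jacobi weight $W_\kappa=u^0y^\a(1-u-y)^0$. For this weight the classical spectral operator has eigenvalue $n(n+\kappa_1+\kappa_2+\kappa_3+2)=n(n+\a+2)$ on the whole of $\CV_n$. Write $A=\partial_uf=\partial_xf$ and $B=\partial_yf$. The classical energy identity on the simplex gives
\[
\int_{\Delta^2}\bigl(u(1-u)A^2-2uyAB+y(1-y)B^2\bigr)y^\a\,\d u\,\d y\le n(n+\a+2)\int_{\Delta^2}|f|^2y^\a\,\d u\,\d y .
\]
The left side is the standard density $u(1-u-y)A^2+y(1-u-y)B^2+uy(A-B)^2$, expanded. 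The key algebraic step is the pointwise identity (a completion of the square, as in the Ge--Xu alternative decomposition)
\[
u(1-u)A^2-2uyAB+y(1-y)B^2=\frac{u}{1-u}\bigl((1-u)A-yB\bigr)^2+\frac{y(1-u-y)}{1-u}B^2 .
\]
It is checked by comparing the coefficients of $A^2$, $AB$ and $B^2$; the $B^2$ coefficient works out because $y(1-y)-\frac{uy^2}{1-u}=\frac{y(1-u-y)}{1-u}$. Returning to $x$, we have $1-u=2-x$ and $(1-u)A-yB=-\bigl((x-2)\partial_xf+y\partial_yf\bigr)$. So the first term is exactly $|Hf|^2$ and the second is $\wt\Phi_y^2|\partial_yf|^2$. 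This yields
\[
\|Hf\|^2_{L^2(\triangle,Y)}+\|\wt\Phi_y\partial_yf\|^2_{L^2(\triangle,Y)}\le n(n+\a+2)\|f\|^2_{L^2(\triangle,Y)} .
\]

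Adding the square and triangle estimates gives \eqref{DyT}. For the equality case, $f=\mathsf P_n^{(\a,1)}(y)$ is extremal on $R$ by the one-dimensional equality. On $\triangle$, integrating out $u$ turns the weight $y^\a$ into $y^\a(1-y)$, so $f$ is orthogonal to all polynomials of lower degree on the simplex. Hence $f\in\CV_n(\Delta^2,W_\kappa)$, it is an eigenfunction, and equality holds there as well.

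The main obstacle I expect is justifying the simplex energy identity for the weight with $\kappa_1=\kappa_3=0$. This requires the classical self-adjoint form of the simplex operator and the vanishing of boundary terms; I would cite it from \cite{GX} rather than rederive it. The completing-the-square identity, by contrast, is routine.
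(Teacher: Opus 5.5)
Your proposal is correct and follows essentially the paper's own route: split $\mathrm T$ along $x=1$, apply the one-dimensional Jacobi inequality with parameters $(\alpha,1)$ on $[0,1]^2$, and on $\triangle$ (after the shift $x=u+1$) use the Ge--Xu decomposition of the simplex energy, which the paper simply cites as Corollary~2.6 of Ge--Xu while you re-derive it by your (correct) completion of the square. The only difference is the equality case on $\triangle$: the paper integrates out $x$ directly via $\frac{(x-1)y^2}{2-x}+\frac{y(2-x-y)}{2-x}=y(1-y)$ and $\int_1^{2-y}\d x=1-y$, whereas you use $\mathsf P_n^{(\alpha,1)}(y)\in\CV_n(\Delta^2,W_\kappa)$; that membership is true, but you should justify it by noting that $\int_0^{1-y}q(u,y)\,\d u=(1-y)r(y)$ with $\deg r\le n-1$ for every $q$ of degree at most $n-1$, not only for $q$ depending on $y$ alone.
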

\begin{proof}
If $(x,y) \in \mathrm{T}$, then
\[
\widetilde{\Phi}_y(x,y)=
\begin{cases}
\sqrt{y(1-y)}, & \text{if } x \le 1,\\[6pt]
\dfrac{\sqrt{y}\,\sqrt{2-x-y}}{\sqrt{2-x}}, & \text{otherwise.}
\end{cases}
\]
Hence,
\begin{align*}
  \left\|\wt{\Phi}_y \partial_y f\right\|^2_{L^2(\mathrm{T},Y)} = \left\| \sqrt{y(1-y)} \partial_y f \right\|^2_{L^2([0,1]^2,Y)} + \left\|  \dfrac{\sqrt{y}\,\sqrt{2-x-y}}{\sqrt{2-x}} \partial_y f \right\|^2_{L^2(\triangle,Y)}.
\end{align*}
By Theorem \ref{BforSq},
\[
\left\| \sqrt{y(1-y)} \partial_y f \right\|^2_{L^2([0,1]^2,Y)} \leq n(n+\a +2) \left\|  f \right\|^2_{L^2([0,1]^2,Y)}.
\]
By the change of variable $x=s+1$, we obtain
\[
\left\|  \dfrac{\sqrt{y}\,\sqrt{2-x-y}}{\sqrt{2-x}} \partial_y f \right\|^2_{L^2(\triangle,Y)} =
\int_{0}^{1} \int_{0}^{1-y} \left|  \dfrac{\sqrt{y}\,\sqrt{1-s-y}}{\sqrt{1-s}} \partial_y f(s+1,y) \right|^2 y^\a \, \d s \, \d y,
\]
and
\[
\|Hf\|^2_{L^2(\triangle,Y)} = \int_{0}^{1} \int_{0}^{1-y} \left|  \frac{\sqrt{s}}{\sqrt{1-s}}
\Bigl((s-1)\partial_x f(s+1,y)+y\partial_y f(s+1,y)\Bigr) \right|^2 y^\a \, \d s \, \d y.
\]
Therefore, by Corollary 2.6 of \cite{GX}, we have
\[
\|Hf\|^2_{L^2(\triangle,Y)} + \left\|  \dfrac{\sqrt{y}\,\sqrt{2-x-y}}{\sqrt{2-x}} \partial_y f \right\|^2_{L^2(\triangle,Y)} \leq n(n+\a +2) \left\|  f \right\|^2_{L^2(\triangle,Y)}.
\]
Thus the inequality \eqref{DyT} holds. We now verify equality for $f(x,y)=\mathsf P_n^{(\a,1)}(y)$. By \eqref{eq:1d_eigs} and integration by parts, we have
\[
\int_{0}^{1} \left|\sqrt{y} \sqrt{1-y} \partial_y \mathsf{P}_n^{(\a,1)}(y)\right|^2 y^\a (1-y)  \, \d y = n(n+\a+2) \int_{0}^{1} \left|\mathsf{P}_n^{(\a,1)}(y)\right|^2 y^\a (1-y)  \, \d y.
\]
Hence,
\[
\left\| \sqrt{y(1-y)} \partial_y \mathsf{P}_n^{(\a,1)} \right\|^2_{L^2([0,1]^2,Y)} = n(n+\a +2) \|  \mathsf{P}_n^{(\a,1)} \|^2_{L^2([0,1]^2,Y)}.
\]
We examine the remaining part
\begin{align*}
&\|H \mathsf{P}_n^{(\a,1)} \|^2_{L^2(\triangle,Y)}  = \int_{0}^{1} \int_{1}^{2-y} \left| \frac{\sqrt{x-1}}{\sqrt{2-x}}
y\partial_y \mathsf{P}_n^{(\a,1)}(y)  \right|^2 y^\a \, \d x \, \d y, \\
&\|\wt{\Phi}_y \partial_y \mathsf{P}_n^{(\a,1)}\|^2_{L^2(\triangle,Y)} = \int_{0}^{1} \int_{1}^{2-y} \left|
\dfrac{\sqrt{y}\,\sqrt{2-x-y}}{\sqrt{2-x}} \partial_y \mathsf{P}_n^{(\a,1)}(y) \right|^2 y^\a  \, \d x \, \d y.
\end{align*}
Adding these identities and integrating with respect to $x$, we obtain
\begin{align*}
\|H \mathsf{P}_n^{(\a,1)} \|^2_{L^2(\triangle,Y)} + \|\wt{\Phi}_y \partial_y \mathsf{P}_n^{(\a,1)}\|^2_{L^2(\triangle,Y)}  =    \int_{0}^{1}  \left|
\sqrt{y(1-y)} \partial_y \mathsf{P}_n^{(\a,1)}(y) \right|^2 (1-y) y^\a \, \d y \\
 =n(n+\a+2) \int_{0}^{1} \left|\mathsf{P}_n^{(\a,1)}(y)\right|^2 y^\a (1-y)  \, \d y = n(n+\a+2) \| \mathsf{P}_n^{(\a,1)}\|^2_{L^2(\triangle,Y)}.
\end{align*}
Combining this identity with the corresponding equality on $[0,1]^2$ gives equality in \eqref{DyT}.
\end{proof}

\begin{lem}\label{lem:adjacent-jacobi-integral}
For \(\alpha>-1\) and \(n\ge 0\),
\[
\int_{-1}^1 (1-x)^\alpha \bigl(P_n^{(\alpha,1)}(x)\bigr)^2\, \d x
=
\frac{2^{\alpha+1}(n+1)}{n+\alpha+1}.
\]
\end{lem}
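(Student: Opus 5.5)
I would derive this integral from the classical norm of Jacobi polynomials, $h_n^{(a,b)}$, by shifting the second parameter from $1$ down to $0$. The identity needed is the standard adjacent-parameter relation
\[
(1+x)P_n^{(\a,1)}(x)=\frac{2}{2n+\a+2}\Bigl((n+1)P_n^{(\a,0)}(x)+(n+1)P_{n+1}^{(\a,0)}(x)\Bigr),
\]
which is a special case of \cite[(4.5.4)]{Szego}. I would verify the constants by comparing leading coefficients and the values at $x=-1$. Here $P_n^{(\a,1)}(-1)$ enters through $(1+x)$ vanishing, which forces $P_n^{(\a,0)}(-1)+P_{n+1}^{(\a,0)}(-1)=0$, consistent with $P_m^{(\a,0)}(-1)=(-1)^m$. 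So the relation reads $(1+x)P_n^{(\a,1)}=c_n\bigl(P_n^{(\a,0)}+P_{n+1}^{(\a,0)}\bigr)$, and I would fix $c_n$ from leading coefficients, using $k_n^{(a,b)}=\binom{2n+a+b}{n}2^{-n}$.

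With the relation in hand, I write
\[
\int_{-1}^1(1-x)^\a\bigl(P_n^{(\a,1)}\bigr)^2\,\d x=\int_{-1}^1(1-x)^\a(1+x)\,P_n^{(\a,1)}\cdot\frac{(1+x)P_n^{(\a,1)}}{1+x}\,\d x .
\]
It is cleaner to pair one factor of $(1+x)P_n^{(\a,1)}$ with the other factor $P_n^{(\a,1)}$ and use orthogonality with respect to $(1-x)^\a$. Expanding $(1+x)P_n^{(\a,1)}=c_n\bigl(P_n^{(\a,0)}+P_{n+1}^{(\a,0)}\bigr)$, the integral becomes
\[
c_n\int(1-x)^\a P_n^{(\a,1)}\bigl(P_n^{(\a,0)}+P_{n+1}^{(\a,0)}\bigr)(1+x)^{-1}\ldots
\]
This reintroduces the division by $1+x$, so I would instead proceed as follows. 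Expand $P_n^{(\a,1)}$ itself in the basis $\{P_m^{(\a,0)}\}$: from the relation, $P_n^{(\a,1)}=\sum_{m\le n} a_m P_m^{(\a,0)}$. By the Christoffel formula these coefficients are $a_m=(-1)^{n-m}\times(\text{explicit})$. Parseval in $L^2((1-x)^\a)$ then gives a finite sum. That sum telescopes, which is the step I expect to be the main computational obstacle.

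A cleaner alternative, which I would try first, avoids sums altogether. Write
\[
I=\int(1-x)^\a P_n^{(\a,1)}\cdot P_n^{(\a,1)}\,\d x .
\]
Since $P_n^{(\a,1)}$ has degree $n$ with leading coefficient $k_n^{(\a,1)}$, orthogonality of $P_n^{(\a,0)}$ against lower degrees, together with the expansion above, gives
\[
I=\sum_m a_m\int(1-x)^\a P_m^{(\a,0)}P_n^{(\a,1)}\,\d x .
\]
Each such integral is computable by Rodrigues' formula for $P_m^{(\a,0)}$ and repeated integration by parts. Alternatively, I would use the Christoffel–Darboux representation
\[
P_n^{(\a,1)}(x)\propto\sum_{m=0}^n\frac{P_m^{(\a,0)}(-1)P_m^{(\a,0)}(x)}{h_m^{(\a,0)}},
\]
that is, the kernel polynomial at $-1$. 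The reproducing property then gives $I=\text{const}^2\cdot K_n(-1,-1)$, where $K_n(-1,-1)=\sum_{m=0}^n 1/h_m^{(\a,0)}$ and $h_m^{(\a,0)}=2^{\a+1}/(2m+\a+1)$. This sum equals $(n+1)(n+\a+1)/2^{\a+1}$. The proportionality constant is obtained by comparing values at $x=-1$, using $P_n^{(\a,1)}(-1)=(-1)^n(n+1)$, so it equals $(n+1)/K_n(-1,-1)$ in absolute value. Therefore
\[
I=\frac{(n+1)^2}{K_n(-1,-1)}=\frac{2^{\a+1}(n+1)}{n+\a+1},
\]
as claimed.
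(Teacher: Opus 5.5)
Your kernel-polynomial argument is correct, and it reaches the result by a different route from the paper.

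\textbf{The paper's route.} It obtains the expansion
\[
(n+\alpha+1)P_n^{(\alpha,1)}=\sum_{k=0}^n(-1)^{n-k}(2k+\alpha+1)P_k^{(\alpha,0)}
\]
by iterating the contiguous relation $(2m+\alpha+1)P_m^{(\alpha,0)}=(m+\alpha+1)P_m^{(\alpha,1)}+(m+\alpha)P_{m-1}^{(\alpha,1)}$ for $m=n,\dots,1$. It then applies Parseval in $L^2((1-x)^\alpha)$ with $h_k^{(\alpha,0)}=2^{\alpha+1}/(2k+\alpha+1)$.

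\textbf{Your route.} You identify $P_n^{(\alpha,1)}$ as a multiple of $K_n(x,-1)$ via Christoffel's theorem. You fix the multiple from the endpoint value $P_n^{(\alpha,1)}(-1)=(-1)^n(n+1)$. You then use $\|K_n(\cdot,-1)\|^2=K_n(-1,-1)$ in place of Parseval.

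\textbf{How they compare.} Since $K_n(x,-1)=2^{-(\alpha+1)}\sum_{k=0}^n(-1)^k(2k+\alpha+1)P_k^{(\alpha,0)}(x)$, your identification is exactly the paper's expansion. Both proofs end with the same arithmetic sum $\sum_{k=0}^n(2k+\alpha+1)=(n+1)(n+\alpha+1)$. The only difference is how the coefficients are produced.

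Your route needs no specific contiguous relation, only endpoint values and the norms $h_m^{(\alpha,0)}$. It also gives, for any $\beta>-1$,
\[
\int_{-1}^1(1-x)^\alpha(1+x)^\beta\bigl(P_n^{(\alpha,\beta+1)}\bigr)^2\,\mathrm{d}x=\frac{P_n^{(\alpha,\beta+1)}(-1)^2}{K_n^{(\alpha,\beta)}(-1,-1)}.
\]
The paper's route is more explicit and hands-on once the contiguous relation is granted.

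\textbf{What to add in a write-up.} Include the one-line justification of the identification. For $\deg q\le n-1$,
\[
\int_{-1}^1(1-x)^\alpha(1+x)q(x)K_n(x,-1)\,\mathrm{d}x=\bigl[(1+x)q\bigr](-1)=0,
\]
and $K_n(\cdot,-1)$ has exact degree $n$, since its coefficient on $P_n^{(\alpha,0)}$ is $(-1)^n/h_n^{(\alpha,0)}\neq0$. Alternatively, your first identity $(1+x)P_n^{(\alpha,1)}=c_n\bigl(P_n^{(\alpha,0)}+P_{n+1}^{(\alpha,0)}\bigr)$, combined with the Christoffel--Darboux formula at $y=-1$, gives the proportionality directly.

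The other exploratory detours can be dropped. Note also that the sum you expected to be the main obstacle is an arithmetic series, not a telescoping one.
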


\begin{proof}
We start from the adjacent-parameter relation for Jacobi polynomials
(see, e.g., \cite[(18.9.5)]{NIST}):
\[
(2m+\alpha+1)P_m^{(\alpha,0)}(x)
=
(m+\alpha+1)P_m^{(\alpha,1)}(x)
+
(m+\alpha)P_{m-1}^{(\alpha,1)}(x),
\qquad m\ge 1.
\]
Equivalently,
\[
(m+\alpha+1)P_m^{(\alpha,1)}(x)
=
(2m+\alpha+1)P_m^{(\alpha,0)}(x)
-
(m+\alpha)P_{m-1}^{(\alpha,1)}(x).
\]
Iterating this identity for \(m=n,\ldots,1\) and using $P_0^{(\alpha,0)}=P_0^{(\alpha,1)}=1$, one obtains
\[
(n+\alpha+1)P_n^{(\alpha,1)}(x)
=
\sum_{k=0}^n (-1)^{\,n-k}(2k+\alpha+1)P_k^{(\alpha,0)}(x).
\]
Now square this identity and integrate on \([-1,1]\) with the weight \((1-x)^\alpha\). Since the family
\(\{P_k^{(\alpha,0)}\}_{k\ge 0}\) is orthogonal with respect to \((1-x)^\alpha\), all cross terms vanish, and therefore
\[
(n+\alpha+1)^2
\int_{-1}^1 (1-x)^\alpha \bigl(P_n^{(\alpha,1)}(x)\bigr)^2\, \d x
=
\sum_{k=0}^n (2k+\alpha+1)^2 h_k^{(\alpha,0)},
\]
where
\[
h_k^{(\alpha,0)}
:=
\int_{-1}^1 (1-x)^\alpha \bigl(P_k^{(\alpha,0)}(x)\bigr)^2\, \d x.
\]
By the standard Jacobi norm formula,
\[
h_k^{(\alpha,0)}=\frac{2^{\alpha+1}}{2k+\alpha+1}.
\]
Hence
\[
(n+\alpha+1)^2
\int_{-1}^1 (1-x)^\alpha \bigl(P_n^{(\alpha,1)}(x)\bigr)^2\, \d x
=
2^{\alpha+1}\sum_{k=0}^n (2k+\alpha+1).
\]
Therefore,
\[
(n+\alpha+1)^2
\int_{-1}^1 (1-x)^\alpha \bigl(P_n^{(\alpha,1)}(x)\bigr)^2\, \d x
=
2^{\alpha+1}(n+1)(n+\alpha+1),
\]
Thus
\[
\int_{-1}^1 (1-x)^\alpha \bigl(P_n^{(\alpha,1)}(x)\bigr)^2\, \d x
=
\frac{2^{\alpha+1}(n+1)}{n+\alpha+1},
\]
which completes the proof.
\end{proof}

\begin{lem}\label{lem:jacobi-moments}
Let $\a > -1$. Then
\[
\int_{0}^{1} (1- y) \left|
\partial_y \mathsf{P}_n^{(\a,1)}(y)  \right|^2 y^{\a+2} \,  \d y = \frac{n(2n+\a)(n+\a+2)}{2(2n+\a+2)} \int_{0}^{1} \left| \mathsf{P}_n^{(\a,1)}(y)\right|^2 y^{\a}  \, \d y,
\]
and
\[
\int_{0}^{1} \left| \mathsf{P}_n^{(\a,1)}(y)\right|^2 y^{\a+1}  \, \d y = \frac{2n+\a+1}{2n+\a+2}  \int_{0}^{1} \left| \mathsf{P}_n^{(\a,1)}(y)\right|^2 y^{\a}  \, \d y.
\]
\end{lem}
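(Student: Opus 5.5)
The plan is to reduce both identities to the known values of $\int_0^1 y^\a|\mathsf P_n^{(\a,1)}|^2\,\d y$ and $\int_0^1 y^\a(1-y)|\mathsf P_n^{(\a,1)}|^2\,\d y$. The first is supplied by Lemma~\ref{lem:adjacent-jacobi-integral}; the second is the standard Jacobi norm. Write $P=\mathsf P_n^{(\a,1)}$, $M_0=\int_0^1 y^\a P^2\,\d y$ and $M_1=\int_0^1 y^{\a+1}P^2\,\d y$.

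\emph{Second identity.} Substitute $x=1-2y$ in Lemma~\ref{lem:adjacent-jacobi-integral}; this gives $M_0=(n+1)/(n+\a+1)$. The standard norm $h_n^{(\a,1)}=\frac{2^{\a+2}}{2n+\a+2}\cdot\frac{n+1}{n+\a+1}$ gives, after the same substitution,
\[
\int_0^1 y^\a(1-y)P^2\,\d y=\frac{n+1}{(2n+\a+2)(n+\a+1)}=\frac{M_0}{2n+\a+2}.
\]
Since $y^{\a+1}=y^\a-y^\a(1-y)$, we get $M_1=M_0\bigl(1-\tfrac{1}{2n+\a+2}\bigr)$, which is the claim.

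\emph{First identity.} I will integrate by parts against the Jacobi equation \eqref{eq:1d_eigs} with $(a,b)=(\a,1)$, i.e.
\[
y(1-y)P''+(\a+1-(\a+3)y)P'=-\lambda P,\qquad \lambda=n(n+\a+2).
\]
Write $y^{\a+2}(1-y)(P')^2=P'\cdot\bigl(y^{\a+2}(1-y)P'\bigr)$ and move one derivative onto the second factor. The boundary term vanishes at both ends because of the factors $y^{\a+2}$ and $1-y$. The derivative expands as
\[
\bigl(y^{\a+2}(1-y)P'\bigr)'=y\bigl(y^{\a+1}(1-y)P'\bigr)'+y^{\a+1}(1-y)P'.
\]
By the Jacobi equation, $\bigl(y^{\a+1}(1-y)P'\bigr)'=y^\a\bigl(-\lambda P+yP'\bigr)$. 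What remains are integrals of the form $\int y^{c}(1-y)^{e}PP'$, and each equals $\tfrac12\int y^c(1-y)^e(P^2)'$. One more integration by parts turns these into combinations of $M_0$ and $M_1$.

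The main obstacle is the boundary term at $y=1$ in $\int y^{\a+2}PP'$, which does not vanish. It contributes $\tfrac12P(1)^2$, and $P(1)=P_n^{(\a,1)}(-1)=(-1)^n(n+1)$. This has to be rewritten in terms of $M_0=(n+1)/(n+\a+1)$, and I will do so with the explicit formulas above. I will organise the computation to avoid this term if possible, for instance by keeping $(1-y)$ attached wherever possible. Otherwise I will combine all terms into the form $A M_1+BM_0$, and then use the second identity to reach the stated coefficient $\frac{n(2n+\a)(n+\a+2)}{2(2n+\a+2)}$. As a sanity check, the case $n=0$ must give $0$. This holds for the combination $\tfrac{\a+1}{2}M_0-\tfrac{\a+2}{2}M_1$ that arises from the lower-order terms, which confirms the signs.
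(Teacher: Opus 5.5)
Your proposal is correct and is essentially the paper's proof. The second identity comes from $M_1=M_0-\int_0^1 y^{\alpha}(1-y)P^2\,\d y$, using Lemma~\ref{lem:adjacent-jacobi-integral} and the standard Jacobi norm. For the first, your integration by parts collapses (since $y^{\alpha+2}P'+y^{\alpha+1}(1-y)P'=y^{\alpha+1}P'$) to the paper's identity $\int_0^1 y^{\alpha+2}(1-y)(P')^2\,\d y=\lambda M_1-\int_0^1 y^{\alpha+1}PP'\,\d y$. The paper does not avoid the boundary term either: it rewrites $\tfrac12P(1)^2=\tfrac12(n+1)(n+\alpha+1)M_0$, and since $\tfrac12\bigl((n+1)(n+\alpha+1)-(\alpha+1)\bigr)=\tfrac{\lambda}{2}$, the second identity yields $\lambda\bigl(\tfrac{2n+\alpha+1}{2n+\alpha+2}-\tfrac12\bigr)M_0$, which is the stated coefficient.
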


\begin{proof}
Set
\[
Q_n(y):=P_n^{(\alpha,1)}(1-2y)=\mathsf P_n^{(\alpha,1)}(y),
\]
and define
\[
I_n:=\int_0^1 y^{\alpha+1} (Q_n(y))^2\, \d y,
\qquad
A_n:=\int_0^1 (1-y)y^\alpha (Q_n(y))^2\, \d y.
\]
Then
\[
D_n:=\int_0^1 y^\alpha Q_n(y)^2\, \d y=I_n+A_n.
\]

We compute \(A_n\) and \(D_n\) separately. Using the change of variables
$
x=1-2y,
$
we obtain
\[
A_n
=
2^{-(\alpha+2)}
\int_{-1}^1 (1-x)^\alpha(1+x)\bigl(P_n^{(\alpha,1)}(x)\bigr)^2\, \d x.
\]
Now we apply the standard Jacobi norm formula
\[
\int_{-1}^1 (1-x)^\alpha(1+x)^\beta
\bigl(P_n^{(\alpha,\beta)}(x)\bigr)^2\, \d x
=
\frac{2^{\alpha+\beta+1}}{2n+\alpha+\beta+1}
\frac{\Gamma(n+\alpha+1)\Gamma(n+\beta+1)}
{n!\,\Gamma(n+\alpha+\beta+1)}.
\]
With \(\beta=1\), this gives
\[
A_n
=
2^{-(\alpha+2)}
\cdot
\frac{2^{\alpha+2}}{2n+\alpha+2}
\frac{\Gamma(n+\alpha+1)\Gamma(n+2)}
{n!\,\Gamma(n+\alpha+2)}.
\]
Since
$
\Gamma(z+1)=z\Gamma(z),
$
we get
\[
A_n=\frac{n+1}{(2n+\alpha+2)(n+\alpha+1)}.
\]
By the change of variables \(x=1-2y\) and Lemma~\ref{lem:adjacent-jacobi-integral}, we have
\[
D_n
=
2^{-(\alpha+1)}
\int_{-1}^1 (1-x)^\alpha \bigl(P_n^{(\alpha,1)}(x)\bigr)^2\, \d x = 2^{-(\alpha+1)}\cdot \frac{2^{\alpha+1}(n+1)}{n+\alpha+1}
=
\frac{n+1}{n+\alpha+1}.
\]
Since \(D_n=I_n+A_n\), we have
\[
I_n=D_n-A_n =  \frac{n+1}{n+\alpha+1}
\left(1-\frac{1}{2n+\alpha+2}\right).
\]
Hence
\[
I_n
=
\frac{2n+\alpha+1}{2n+\alpha+2}\cdot
\frac{n+1}{n+\alpha+1}
=
\frac{2n+\alpha+1}{2n+\alpha+2}\,D_n.
\]
This proves the second identity. It remains to establish the first one. Since $Q_n=\mathsf P_n^{(\alpha,1)}$, the Jacobi differential equation gives
\[
-\frac{\d}{\d y}
\left(y^{\alpha+1}(1-y)^2Q_n'(y)\right)
=
\lambda_n^{(\a,1)} y^\alpha(1-y)Q_n(y).
\]
Multiplying by $\dfrac{y}{1-y}Q_n(y)$ and integrating by parts, we obtain
\[
\lambda_n^{(\a,1)} I_n
=
\int_0^1 y^{\alpha+2}(1-y)|Q_n'(y)|^2\,\d y
+
\int_0^1 y^{\alpha+1}Q_n(y)Q_n'(y)\,\d y.
\]
Moreover, using $Q_n(1)=P_n^{(\alpha,1)}(-1)=(-1)^n(n+1)$,
\[
\int_0^1 y^{\alpha+1}Q_n(y)Q_n'(y)\,\d y
=
\frac{(n+1)^2}{2}-\frac{\alpha+1}{2}D_n.
\]
Consequently,
\[
\int_0^1 y^{\alpha+2}(1-y)|Q_n'(y)|^2\,\d y
=
\lambda_n^{(\a,1)} I_n-\frac{(n+1)^2}{2}+\frac{\alpha+1}{2}D_n.
\]
Using the formulas for $I_n$ and $D_n$ obtained above and simplifying, we get
\[
\int_0^1 y^{\alpha+2}(1-y)|Q_n'(y)|^2\,\d y
=
\frac{n(2n+\alpha)(n+\alpha+2)}{2(2n+\alpha+2)}D_n,
\]
which proves the first identity and completes the proof.
\end{proof}

\begin{cor}
Let $\widetilde{\Phi}_y$ be as in the above proposition. For $\a > -1$ let $\Upsilon(x,y)=y^\a$. Then
\begin{align*}
  \sqrt{\frac{n(n+\alpha+2)}{2}}  \le \sup_{\substack{f\in\Pi_n\\ f\ne0}} \frac{ {\|\widetilde{\Phi}_y \partial_y f\|}_{L^2(\mathrm{T},\Upsilon)} } { {\|f\|}_{L^2(\mathrm{T},\Upsilon)} }
\le \sqrt{n(n+\alpha+2)} .
\end{align*}
\end{cor}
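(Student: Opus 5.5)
The upper bound will follow from the splitting $\mathrm T=[0,1]^2\cup\triangle$ already used in Proposition~\ref{prop:bernstein_y}. The lower bound will come from the test polynomial $f=\mathsf P_n^{(\a,1)}(y)$, evaluated with Lemmas~\ref{lem:adjacent-jacobi-integral} and~\ref{lem:jacobi-moments}.

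\emph{Upper bound.} Write $\|\widetilde\Phi_y\partial_yf\|^2_{L^2(\mathrm T,\Upsilon)}$ as a square part plus a triangle part.
\begin{itemize}
\item On $[0,1]^2$ we have $\widetilde\Phi_y=\sqrt{y(1-y)}$ and $\Upsilon=y^\a=w_{\a,0}(y)$. Since $\deg_y f\le n$, applying \eqref{eq:1d_bernstein} in $y$ for each fixed $x$ (i.e.\ \eqref{eq:Bernstein_square_t} with $\b=0$, $\g=\delta=0$) gives the bound $n(n+\a+1)\|f\|^2_{L^2([0,1]^2,\Upsilon)}$. This is at most $n(n+\a+2)\|f\|^2_{L^2([0,1]^2,\Upsilon)}$.
\item On $\triangle$ we have $\Upsilon=Y$. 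The triangle estimate from the proof of Proposition~\ref{prop:bernstein_y} (Corollary~2.6 of \cite{GX} after $x=s+1$) gives the bound $n(n+\a+2)\|f\|^2_{L^2(\triangle,\Upsilon)}$, after discarding the nonnegative term $\|Hf\|^2$.
\end{itemize}
Adding the two pieces yields the right-hand inequality.

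\emph{Lower bound.} Take $f=Q_n(y):=\mathsf P_n^{(\a,1)}(y)$ and write $\lambda=n(n+\a+2)$, with $D_n,A_n$ as in the proof of Lemma~\ref{lem:jacobi-moments}.

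First, the denominator. Since the $x$-section of $\mathrm T$ at height $y$ has length $2-y=1+(1-y)$,
\[
\|Q_n\|^2_{L^2(\mathrm T,\Upsilon)}=D_n+A_n .
\]
By the second identity of Lemma~\ref{lem:jacobi-moments}, $A_n=D_n/(2n+\a+2)$.

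Next, the square part of the numerator is $\int_0^1 y(1-y)|Q_n'|^2y^\a\,\d y$. Split the factor as $y(1-y)=y(1-y)^2+y^2(1-y)$.
\begin{itemize}
\item The first piece equals $\lambda A_n$, by the Jacobi equation \eqref{eq:1d_eigs} with parameters $(\a,1)$ and integration by parts.
\item The second piece equals $\lambda(2n+\a)D_n/(2(2n+\a+2))$, by the first identity of Lemma~\ref{lem:jacobi-moments}.
\end{itemize}
Summing and using $A_n=D_n/(2n+\a+2)$, the square part equals exactly $\lambda D_n/2$.

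This falls short of the target $\frac{\lambda}{2}(D_n+A_n)$ by exactly $\frac{\lambda}{2}A_n$. The triangle part must supply this deficit, and I expect this to be the main step. Substituting $u=2-x$ in the inner integral gives
\[
\int_1^{2-y}\frac{2-x-y}{2-x}\,\d x=\int_y^1\frac{u-y}{u}\,\d u=1-y+y\ln y .
\]
Hence the triangle part equals $\int_0^1 y\,(1-y+y\ln y)|Q_n'|^2y^\a\,\d y$.

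I will show $1-y+y\ln y\ge \tfrac12(1-y)^2$ on $(0,1]$. Set $h(y)=1-y+y\ln y-\tfrac12(1-y)^2$. Then $h(1)=0$ and $h'(y)=\ln y+1-y\le 0$, so $h$ is nonincreasing and hence $h\ge0$. Therefore the triangle part is at least
\[
\tfrac12\int_0^1 y(1-y)^2|Q_n'|^2y^\a\,\d y=\tfrac{\lambda}{2}A_n .
\]
Altogether $\|\widetilde\Phi_y\partial_yQ_n\|^2_{L^2(\mathrm T,\Upsilon)}\ge\frac{\lambda}{2}\|Q_n\|^2_{L^2(\mathrm T,\Upsilon)}$, which gives the left-hand inequality.
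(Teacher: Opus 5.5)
Your proposal is correct and follows the paper's proof. For the upper bound you use the same splitting $[0,1]^2\cup\triangle$, with constant $n(n+\a+1)$ on the square and the Ge--Xu triangle estimate with $\|Hf\|^2$ dropped; for the lower bound you use the same test polynomial $\mathsf P_n^{(\a,1)}(y)$ evaluated via Lemma~\ref{lem:jacobi-moments}. Your inequality $1-y+y\ln y\ge\frac12(1-y)^2$ is just a rearrangement of the paper's $\log y\ge\frac12\bigl(y-\frac1y\bigr)$, and your derivative argument supplies the short justification that the paper omits.
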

\begin{proof}
For the upper bound, write $\mathrm T=Q\cup\triangle$,
where $Q=[0,1]^2$. On $Q$, the weight is
$\Upsilon=y^\a$, so the one-dimensional Jacobi inequality
in the $y$-variable has parameters $(\a,0)$ and gives
the constant $n(n+\a+1)$. On $\triangle$, the weights
$\Upsilon$ and $Y$ coincide, and the estimate established
in the proof of Proposition~\ref{prop:bernstein_y} applies
after dropping the nonnegative term involving $Hf$.
Consequently,
\begin{align*}
\|\wt\Phi_y\partial_y f\|_{L^2(\mathrm T,\Upsilon)}^2
&\le
n(n+\a+1)\|f\|_{L^2(Q,\Upsilon)}^2
+
n(n+\a+2)\|f\|_{L^2(\triangle,\Upsilon)}^2
\\
&\le
n(n+\a+2)\|f\|_{L^2(\mathrm T,\Upsilon)}^2.
\end{align*}
For the lower bound, take
$
Q_n(y):=\mathsf P_n^{(\a,1)}(y)$ and regard $Q_n$ as a polynomial on $\mathrm T$ independent of $x$. Direct
integration with respect to $x$ gives
\begin{align*}
\|\wt\Phi_y\partial_y Q_n\|_{L^2(\mathrm T,\Upsilon)}^2
&=
\int_0^1 y^{\a+1}
\bigl(2(1-y)+y\log y\bigr)|Q_n'(y)|^2\,\d y.
\end{align*}
For $0<y\le1$,
\[
\log y\ge \frac12\left(y-\frac1y\right),
\]
and hence
\[
2(1-y)+y\log y
\ge
\frac32(1-y)^2+y(1-y).
\]
Therefore,
\begin{align*}
\|\wt\Phi_y\partial_y Q_n\|_{L^2(\mathrm T,\Upsilon)}^2
\ge
\frac32
\int_0^1y^{\a+1}(1-y)^2|Q_n'(y)|^2\,\d y
+
\int_0^1y^{\a+2}(1-y)|Q_n'(y)|^2\,\d y.
\end{align*}
Set
\[
D_n:=\int_0^1 (Q_n(y))^2 y^\a\,\d y.
\]
By the Jacobi differential equation \eqref{eq:1d_eigs}, integration by parts, and Lemma~\ref{lem:jacobi-moments},
\[
\int_0^1 y^{\a+1}(1-y)^2|Q_n'(y)|^2\,\d y
=
\lambda_n^{(\a,1)} \int_0^1 |Q_n(y)|^2 y^\a(1-y)\,\d y
=
\frac{\lambda_n^{(\a,1)}}{2n+\a+2}D_n.
\]
Moreover, by Lemma~\ref{lem:jacobi-moments},
\[
\int_0^1y^{\a+2}(1-y)|Q_n'(y)|^2\,\d y
=
\frac{\lambda_n^{(\a,1)} (2n+\a)}{2(2n+\a+2)}D_n.
\]
Thus
\[
\|\wt\Phi_y\partial_y Q_n\|_{L^2(\mathrm T,\Upsilon)}^2
\ge
\frac{\lambda_n^{(\a,1)} (2n+\a+3)}{2(2n+\a+2)}D_n.
\]
On the other hand, the second identity in Lemma~\ref{lem:jacobi-moments}
yields
\begin{align*}
\|Q_n\|_{L^2(\mathrm T,\Upsilon)}^2
=
\int_0^1(2-y) |Q_n(y)|^2 y^\a\,\d y
=
\frac{2n+\a+3}{2n+\a+2}D_n.
\end{align*}
Consequently,
\[
\|\wt\Phi_y\partial_y Q_n\|_{L^2(\mathrm T,\Upsilon)}^2
\ge
\frac{\lambda_n^{(\a,1)}}{2}\|Q_n\|_{L^2(\mathrm T,\Upsilon)}^2,
\]
which proves the lower bound.
\end{proof}

\section{The Siciak extremal function}
Let $E$ be a compact subset of $\mathbb{C}^d$. Denote the uniform norm on $E$ by $\|\cdot\|_E$.
The Siciak extremal function on $E$, denoted by $\Phi_E(z)$, is defined for $z \in \mathbb{C}^d$ by
\begin{align}\label{extremPhi}
\Phi_{E}(z):=\sup \left\{|P(z)|^{\frac{1}{\operatorname{deg} P}}: \operatorname{deg} P \geq 1
\text { and } {\|P\|}_{E} \leq 1, \quad  P \in \CP\right\},
\end{align}
where $\CP$ denotes the space of holomorphic polynomials.
We refer to \cite{S1962} for properties of this function and its applications in the theory of analytic
functions in several complex variables. To state the result most relevant to us, we need the definition of {\it plurisubharmonic} (psh)
functions. A function $u$ with values in $[-\infty, +\infty)$ defined in an open set $X \subset \mathbb{C}^d$ is a psh function
if
\begin{enumerate}
    \item $u$ is upper semi-continuous;
    \item For arbitrary $z$ and $w$ in $\mathbb{C}^d$ the function
   \[
        \tau \mapsto u(z + \tau w)
  \]
is subharmonic in the open subset of $\mathbb{C}$ where it is defined.
\end{enumerate}
If both $u$ and $-u$ are plurisubharmonic, then $u$ is called \textit{pluriharmonic}.
The Lelong class of psh functions is defined by
$$
\mathfrak{L}_d := \left \{ u \ \text{psh on } \mathbb{C}^n: \ u(z) \leq \log(1 + \sqrt{|z_1|^2 + \ldots + |z_d|^2}) + O(1) \right \}.
$$
One of the basic results for Siciak's extremal function is its connection with the function
\begin{equation} \label{extremE}
V_E(z):= \sup \{ u(z) : u \in \mathfrak{L}_d, \, u|_E \le 0 \}.
\end{equation}
For $d =1$, the function $V_E$ is the classical Green function of the planar compact set $E$ that has a logarithmic
pole at infinity. The following theorem is due to Zakharyuta  \cite{Z} and Siciak \cite{S1981}.

\begin{thm}
If $E$ is a compact subset of $\mathbb{C}^d$ then
\[
\log \Phi_E(z) = V_E(z) \quad \text{for } z \in \mathbb{C}^d.
\]
\end{thm}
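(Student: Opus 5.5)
The equality splits into two inequalities. The easy one is $\log\Phi_E\le V_E$; the converse $V_E\le\log\Phi_E$ needs an approximation theorem for functions of the Lelong class by normalized logarithms of polynomial moduli.

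For $\log\Phi_E\le V_E$, fix a polynomial $P$ with $\deg P=k\ge1$ and $\|P\|_E\le1$, and set $u=\frac1k\log|P|$. This function is plurisubharmonic on $\mathbb C^d$ because $\log|P|$ is. Since $|P(z)|\le C(1+|z|)^k$, we get $u(z)\le\log(1+|z|)+O(1)$, so $u\in\mathfrak L_d$. It also satisfies $u\le0$ on $E$. By the definition \eqref{extremE}, $u\le V_E$ everywhere. Taking the supremum over all admissible $P$ in \eqref{extremPhi} gives $\log\Phi_E\le V_E$.

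For $V_E\le\log\Phi_E$, fix $u\in\mathfrak L_d$ with $u\le0$ on $E$, a point $z_0$ and $\varepsilon>0$. The goal is a polynomial $P$ of some degree $k$ with $\|P\|_E\le e^{k\varepsilon}$ and $\frac1k\log|P(z_0)|\ge u(z_0)-\varepsilon$. Then $P e^{-k\varepsilon}$ is admissible in \eqref{extremPhi}, which yields $\log\Phi_E(z_0)\ge u(z_0)-2\varepsilon$; letting $\varepsilon\to0$ and taking the supremum over $u$ finishes the proof. (If $u(z_0)=-\infty$ there is nothing to prove.) To construct $P$ I would homogenize. Define on $\mathbb C^{d+1}$
\[
H(z,t):=\begin{cases}\log|t|+u(z/t), & t\ne0,\\ \limsup_{(z',t')\to(z,0)}H(z',t'), & t=0.\end{cases}
\]
The growth condition on $u$ makes $H$ locally bounded above near $\{t=0\}$. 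Hence $H$ extends as a plurisubharmonic function, by the removable singularity theorem for psh functions bounded above across the analytic hypersurface $\{t=0\}$. It is logarithmically homogeneous: $H(\lambda\zeta)=\log|\lambda|+H(\zeta)$. Consequently $\Omega=\{H<0\}$ is a balanced pseudoconvex domain, and $e^{H}$ is its Minkowski functional.

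The key step, and the main obstacle, is the Lelong--Bremermann--Siciak approximation of such a function:
\[
H=\Bigl(\limsup_{k\to\infty}\ \sup\Bigl\{\tfrac1k\log|Q|:\ Q \text{ homogeneous of degree }k,\ |Q|\le e^{kH}\Bigr\}\Bigr)^{*}.
\]
I would first reduce to continuous $H$. Regularize by convolution with a $U(1)$-invariant kernel and re-homogenize, which gives continuous log-homogeneous psh functions $H_j\downarrow H$. Because $u\le0$ on the compact set $E$ and the $H_j$ are continuous and decrease to $H$, a Dini-type argument gives $H_j\le\varepsilon$ on $E\times\{1\}$ for large $j$, while $H_j(z_0,1)\ge H(z_0,1)=u(z_0)$. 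For continuous $H_j$, the sublevel set $\{H_j<0\}$ is a balanced pseudoconvex domain, hence Runge. I would then expand a holomorphic function on it with the appropriate growth into homogeneous polynomials. Such a function can be produced by H\"ormander's $L^2$ estimates with weight $kH_j$, or via the Hartogs domain $\{|s|<e^{-H_j}\}$. Cauchy estimates on the homogeneous components then give homogeneous $Q_k$ with $\frac1k\log|Q_k|\le H_j+\varepsilon$ on compacts and $\frac1k\log|Q_k(z_0,1)|\ge H_j(z_0,1)-\varepsilon$ for some large $k$.

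Finally, dehomogenize: $P(z):=Q_k(z,1)$ is a polynomial of degree at most $k$ with $\|P\|_E\le e^{2k\varepsilon}$ and $\frac1k\log|P(z_0)|\ge u(z_0)-\varepsilon$. If $\deg P<k$, the estimate only improves after passing to the exponent $1/\deg P$, after rescaling so that $\|P\|_E\le1$. This is the polynomial required above (with $2\varepsilon$ in place of $\varepsilon$, which is harmless). The delicate points I expect are the regularity of $H$ across $t=0$ and obtaining uniform control on the compact set $E$ rather than only pointwise; both are handled by the continuous regularization before applying Runge approximation.
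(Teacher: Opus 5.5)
There is no proof in the paper to compare against: the theorem is quoted from Zakharyuta and Siciak. Your outline follows Siciak's homogenization route. Your half $\log\Phi_E\le V_E$ is complete, and the reduction and dehomogenization in the other half are essentially fine. The trouble is in the step you call the key one, which you assert rather than carry out, and in what you say the regularization achieves.

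\textbf{The regularization.} Convolving and re-homogenizing does not give continuity across $\{t=0\}$. Let $u_j\ge u$ be the mollification of $u=H(\cdot,1)$. Then $u_j(w)\le\sup_{|w'-w|\le1/j}u(w')$ gives $\log|t|+u_j(z/t)\le\sup_{|w'|\le1/j}H(z+tw',t)$. By upper semicontinuity of $H$, the new $H_j$ therefore coincides with $H$ on $\{t=0\}$, so any discontinuity there survives. This is harmless, because your Dini argument only uses $H_j$ near $E\times\{1\}$.

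\textbf{The real gap.} Nothing in your sketch produces the lower bound at the \emph{prescribed} point $(z_0,1)$. Your displayed formula holds only after upper semicontinuous regularization. Likewise, the homogeneous components of an arbitrary $F\in\mathcal{O}(\Omega)$ singular on $\partial\Omega$ give only $(\limsup_m\frac1m\log|Q_m|)^*=H$. Both yield good polynomials at points \emph{near} $(z_0,1)$, not at it.

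\textbf{A repair: choose $F$ adapted to $z_0$.} Pseudoconvexity then enters through holomorphic convexity, not through the Runge property; polynomials are dense on every balanced open set anyway.
\begin{itemize}
\item Assume $u(z_0)>-\infty$ and set $\zeta^*=e^{-u(z_0)}(z_0,1)\in\partial\Omega$.
\item By Oka's solution of the Levi problem (or H\"ormander's $L^2$ theory), there is $F\in\mathcal{O}(\Omega)$ unbounded on $\lambda_n\zeta^*$ with $\lambda_n\uparrow1$.
\item Write $F=\sum_mQ_m$. The series $\sum_mQ_m(\zeta^*)\lambda^m$ has radius of convergence exactly $1$: at least $1$ because $\lambda\zeta^*\in\Omega$ for $|\lambda|<1$, at most $1$ by unboundedness. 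Hence $\limsup_m\frac1m\log|Q_m(z_0,1)|=u(z_0)$.
\item Since $u\le0$ on $E$, the set $K_\varepsilon=\{\lambda e^{-\varepsilon}(z,1):|\lambda|\le1,\ z\in E\}$ is a compact subset of $\Omega$.
\item Cauchy's estimate gives $|Q_m(z,1)|\le Me^{m\varepsilon}$ on $E$, with $M=\sup_{K_\varepsilon}|F|$.
\end{itemize}
So $Q_m(\cdot,1)/(Me^{m\varepsilon})$ are the required polynomials, obtained directly from $H$ with no regularization at all.

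\textbf{Alternative repair.} If you keep your displayed formula as a black box, take a good $Q$ at a point $(z',t')$ near $(z_0,1)$ and set $P(z)=Q(z+z'/t'-z_0,1)$. Since $u$ is upper semicontinuous and $u\le0$ on $E$, $u\le\varepsilon$ on a neighbourhood of $E$. The shift therefore costs only a factor $e^{k\varepsilon}$ in $\|P\|_E$, again without regularizing.

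\textbf{Degree step.} Passing from $1/k$ to $1/\deg P$ helps only when $|P(z_0)|>1$ after normalization. The other case is covered by $\log\Phi_E\ge0$, which you should state.
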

In particular, the above theorem is very helpful to compute $\Phi_E(z)$ in many concrete cases.
In the one-dimensional case we have (see \cite{Klimek})
\[
\Phi_{[-1,1]} (z) = h\left(\frac{1}{2}\lvert z - 1 \rvert + \frac{1}{2}\lvert z + 1 \rvert\right),
\qquad z \in \mathbb{C},
\]
where $h(\zeta)=\zeta+\sqrt{\zeta^2-1}$ if we choose a branch of the square root function so that $|h(\zeta)|>1$ for
$\zeta \in \mathbb{C}\setminus [-1,1]$. In particular, $h(t)=t+\sqrt{t^2-1}$ for $t \geq 1$.

If $E = I_d = [-1,1]^d$ is the $d$-dimensional cube, then, by the product property of the Siciak extremal function (see \cite{S1981}), we have
\[
\Phi_E(z) = \max_{1 \leq j \leq d} h\left(\frac{1}{2}\lvert z_j + 1 \rvert + \frac{1}{2}\lvert z_j - 1 \rvert\right), \quad z \in \mathbb{C}^d.
\]
If $E = S_d = \{x \in \mathbb{R}^d : x_1,\ldots,x_d \ge 0 \text{ and } x_1 + \cdots + x_d \le 1\}$ is the standard simplex, then Baran's formula (see \cite{Baran}) gives,
\begin{align*}
 \Phi_{S_d}(z)= h(|z_1| + \ldots + |z_d| + |z_1 + \ldots + z_d -1|).
\end{align*}
Since $\mathrm{T}=2S_2 \cap [0,2] \times [0,1]$, by \cite{B3}, for $z=(z_1,z_2) \in \mathbb{C}^2$, we have
\begin{align}\label{EforT}
  \Phi_{\mathrm{T}}(z)= \max \{ \Phi_{2S_2}(z), \Phi_{[0,2] \times [0,1]}(z)\} = \max \{ \Phi_{S_2}(z/2), \Phi_{I_2}(z_1-1,2z_2-1)\}.
\end{align}

Let $f$ be real-valued in a neighborhood of $x_0 \in \mathbb{R}$. The lower Dini derivative $D_{+}f$, also called a
lower right-hand derivative, of $f$ at $x_0$ is defined by
\begin{align*}
 D_{+}f(x_0):=\liminf\limits_{h \rightarrow 0^{+}} \frac{f(x_0+h) - f(x_0)}{h}.
\end{align*}
Let $\{e_1,\ldots,e_d\}$ be the standard orthonormal basis in $\mathbb{R}^d$. Let $E$ be a compact set in $\mathbb{C}^d$.
For $z \in \operatorname{int} E$ and each $j = 1, \ldots, d$, define $F_j : \mathbb{R} \to \mathbb{R}$ by
\[
F_j(t):=  V_E\bigl(z + \i\, t e_j\bigr), \quad t \in \mathbb{R}.
\]
Since $\Phi_E(z)=1$ for $z \in E$, it follows $V_E(z)=0$ by $\log \Phi_E(z) = V_E(z)$. Therefore, for $z \in \operatorname{int} E$,
\[
D_{+}F_j(0)= \liminf\limits_{\epsilon \rightarrow 0^{+}} \frac{V_E(z + \i\, \epsilon e_j) - V_E(z)}{\epsilon}
   = \liminf\limits_{\epsilon \rightarrow 0^{+}} \frac{V_E(z + \i\,\epsilon e_j)}{\epsilon}=:D^{+}_j V_E(z).
\]
By its definition, $D^{+}_j V_E$ can be called a Dini derivative of the extremal function $V_E$. These derivatives are
closely related to the Bernstein-type inequalities, as seen in the following theorem \cite{B1, B2}, in which a compact
set $K \subset \RR^d$ is treated as a subset of $\mathbb{C}^{d}$ such that $\mathbb{R}^{d}=\{(z_{1}, \ldots, z_{d}) \in \mathbb{C}^{d}: \operatorname{Im} z_{j}=0, j=1, \ldots, d\}$.

\begin{thm}
Let $K$ be a compact set in $\mathbb{R}^d$ with nonempty interior. For every $x \in \operatorname{int}K$ and every real polynomial $f$ in $d$ variables of total degree at most $n$,
\begin{align}\label{Baran_extremal}
  |\partial_j f(x)| \leq n D^{+}_j V_K(x) \left({\|f\|}^2_K - f^2(x)\right)^{1/2}, \quad j=1,\ldots,d.
\end{align}
\end{thm}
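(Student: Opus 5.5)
The plan is to compare the polynomial $f$ with the Green function of $[-1,1]$ through the Lelong class. We then differentiate this comparison in the imaginary direction $\i e_j$ at the real point $x$.

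\emph{Normalization.} If $\|f\|_K=0$ then $f\equiv0$, because $K$ has nonempty interior, and there is nothing to prove. Otherwise both sides of \eqref{Baran_extremal} are homogeneous of degree one in $f$, so we may assume $\|f\|_K=1$. Put $a:=f(x)\in[-1,1]$ and $b:=\partial_j f(x)$. If $|a|=1$, then $|f|$ attains its maximum over $K$ at the interior point $x$. Hence $\nabla f(x)=0$, the left side of \eqref{Baran_extremal} vanishes, and the inequality is trivial. So we assume $|a|<1$.

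\emph{Comparison with the Lelong class.} Set $g:=V_{[-1,1]}=\log|h(\cdot)|$, the Green function of $[-1,1]$ with pole at infinity. We regard $f$ as a holomorphic polynomial on $\mathbb C^d$ and define $u(z):=\frac1n g(f(z))$. This $u$ is psh, being a subharmonic function composed with a holomorphic map. Since $g(w)=\log|w|+O(1)$ at infinity and $\deg f\le n$, we get $u(z)\le\log(1+|z|)+O(1)$. Moreover $u\le0$ on $K$, because $f(K)\subset[-1,1]$. Therefore $u\in\mathfrak L_d$, and by \eqref{extremE}
\[
g\bigl(f(x+\i\epsilon e_j)\bigr)\le n\,V_K(x+\i\epsilon e_j),\qquad \epsilon>0 .
\]

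\emph{Local expansion.} By Taylor's formula, $f(x+\i\epsilon e_j)=a+\i\epsilon b+O(\epsilon^2)$, where the $O(\epsilon^2)$ term is complex. Near an interior point $a\in(-1,1)$ we have $g(w)=|\operatorname{Im}\varphi(w)|$ with $\varphi(w)=\arccos w$. Here $\varphi$ is holomorphic near $a$ (up to sign conventions), is real on $(-1,1)$, and satisfies $|\varphi'(a)|=1/\sqrt{1-a^2}$. Consequently $g$ is Lipschitz in a neighbourhood of $a$. Combining this with $|\operatorname{Im}\varphi(a+\i\epsilon b)|=\epsilon|b|/\sqrt{1-a^2}+O(\epsilon^2)$ gives
\[
g\bigl(f(x+\i\epsilon e_j)\bigr)=\frac{\epsilon|b|}{\sqrt{1-a^2}}+O(\epsilon^2).
\]
We divide the comparison inequality by $\epsilon$ and take $\liminf_{\epsilon\to0^+}$. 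Using $V_K(x)=0$, this yields
\[
\frac{|b|}{\sqrt{1-a^2}}\le n\,D_j^+V_K(x),
\]
which is exactly \eqref{Baran_extremal} after undoing the normalization.

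\emph{Main obstacle.} The delicate step is this local expansion. One must check carefully that the complex $O(\epsilon^2)$ perturbation, including its real part, does not affect the first-order term. That control comes precisely from writing $g$ as $|\operatorname{Im}|$ of a holomorphic function that is real on $(-1,1)$. It is also the reason the degenerate case $|a|=1$ has to be dealt with separately, as done above.
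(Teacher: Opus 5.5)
Your argument is correct, and it is essentially the standard proof from Baran's work, which the paper quotes without proof. That proof compares $\frac1n V_{[-1,1]}(f(z))$ with $V_K(z)$ through the Lelong class, then extracts the first-order term of $V_{[-1,1]}=|\operatorname{Im}\arccos|$ at the real point $f(x)\in(-1,1)$ in the direction $\i e_j$; your separate treatment of $|f(x)|=\|f\|_K$, and your observation that the left-hand quotient has a genuine limit (so the $\liminf$ in $D_j^+V_K$ causes no difficulty), handle the only delicate points.
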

Related $L^p$ Bernstein inequalities with factors linked
to the pluripotential extremal function have been established
on certain cuspidal domains; see \cite{BT}.

To see the connection to the Bernstein inequality on the trapezoid, one can compute the Dini derivative explicitly,
\begin{align*}
      &D^{+}_i V_{2S_2}(x_1,x_2)= \frac{\sqrt{x_i+ 2- x_1-x_2} }{\sqrt{2x_i}\sqrt{2-x_1-x_2}}, \quad i = {1,2}, \\
      &D^{+}_1 V_{[0,2] \times [0,1]}(x_1,x_2)= \frac{1 }{\sqrt{x_1}\sqrt{2-x_1}}, \quad
      D^{+}_2 V_{[0,2] \times [0,1]}(x_1,x_2)= \frac{1 }{\sqrt{x_2}\sqrt{1-x_2}}.
\end{align*}
Since
$
V_{\mathrm{T}} = \max \{V_{2S_2}, V_{[0,2] \times [0,1]} \},
$
we obtain
\[
D_j^+ V_{\mathrm{T}}(x) = \max \left( D_j^+ V_{2S_2}(x), \, D_j^+ V_{[0,2] \times [0,1]}(x) \right).
\]
Hence, for $(x,y)$ belonging to the interior of the set $\mathrm{T}$,
\begin{align*}
D_1^+ V_{\mathrm{T}}(x,y) &=
\max \left\{
\frac{\sqrt{2 - y}}{\sqrt{2}\sqrt{x}\sqrt{2 - x - y}},
\;
\frac{1}{\sqrt{x(2 - x)}}
\right\} \\&= \frac{\sqrt{2 - y}}{\sqrt{2}\sqrt{x}\sqrt{2 - x - y}} = \frac{1}{\sqrt{2} \wt{\Phi}_x(x,y)}
\end{align*}
and
\[
D_2^+ V_{\mathrm{T}}(x,y) =
\max \left\{
\frac{\sqrt{2 - x}}{\sqrt{2}\sqrt{y}\sqrt{2 - x - y}},
\;
\frac{1}{\sqrt{y(1 - y)}}
\right\}.
\]
Thus $\wt\Phi_x$ is, up to the constant factor $1/\sqrt2$, the reciprocal
of the corresponding Dini derivative. In the $y$-direction,
\[
\frac{1}{D_2^+V_{\mathrm T}(x,y)}
=
\min\left\{
\sqrt{2}\,\frac{\sqrt y\sqrt{2-x-y}}{\sqrt{2-x}},
\sqrt{y(1-y)}
\right\}.
\]
Since
\[
\wt\Phi_y(x,y)
=
\min\left\{
\frac{\sqrt y\sqrt{2-x-y}}{\sqrt{2-x}},
\sqrt{y(1-y)}
\right\},
\]
we obtain
\[
\wt\Phi_y(x,y)
\le
\frac{1}{D_2^+V_{\mathrm T}(x,y)}
\le
\sqrt2\,\wt\Phi_y(x,y).
\]
Consequently, the boundary factors appearing in \eqref{DxT} and
Proposition~\ref{prop:bernstein_y} are directly controlled by the reciprocal
Dini derivatives entering \eqref{Baran_extremal}. Thus, (5.3) provides a weighted $L^2$ analogue featuring the same geometric factor, up to normalization, as the pointwise Bernstein inequality \eqref{Baran_extremal} for $K= \mathrm{T}$.

\section*{Acknowledgements}
The author was supported by the Polish National Science Centre (NCN) Miniatura grant no. 2025/09/X/ST1/00082.

ChatGPT was used only for language editing of the manuscript. All mathematical results, proofs, and arguments were developed and verified independently by the author.

\end{document}